\tikzstyle{edge} = [fill,opacity=.5,fill opacity=.5,line cap=round, line join=round, line width=50pt]
\theoremstyle{plain}
\theoremstyle{definition}
\newtheorem{theorem}{Theorem}[section]
\newtheorem{conjecture}[theorem]{Conjecture}
\newtheorem{remark}[theorem]{Remark}
\newtheorem{definition}[theorem]{Definition}
\newtheorem{example}[theorem]{Example}
\newtheorem{proposition}[theorem]{Proposition}
\newtheorem{corollary}[theorem]{Corollary}
\DeclareMathAlphabet{\mathpzc}{OT1}{pzc}{m}{it}
\renewcommand{\b}{\mathfrak{b}}
\renewcommand{\c}{\mathfrak{c}}
\newcommand{\ub}[1]{\underbracket[.5pt][1pt] {#1}} 
\newcommand{\omitt}[1]{}
\begin{document}

\title{Enumerations relating braid and commutation classes}

\date{}

\author[Fishel]{Susanna Fishel}
\author[Mili\'{c}evi\'{c}]{Elizabeth Mili\'{c}evi\'{c}}
\author[Patrias]{Rebecca Patrias}
\author[Tenner]{Bridget Eileen Tenner}

\thanks{SF was partially supported by grant 359602 from the Simons Foundation, as well as by funds from the Mathematical Sciences Research Institute. EM was partially supported by NSF grant DMS-1600982. RP was supported by NSERC, the Canada Research Chairs Program, and CRM-ISM. BT was partially supported by Simons Foundation Collaboration Grant for Mathematicians 277603 and a DePaul University Faculty Summer Research Grant. \textcopyright\ 2018. This manuscript version is made available under the CC-BY-NC-ND 4.0 license http://creativecommons.org/licenses/by-nc-nd/4.0/}

\address{Susanna Fishel, School of Mathematical and Statistical
  Sciences, Arizona State University, Tempe, AZ, USA}
\email{sfishel1@asu.edu}

\address{Elizabeth Mili\'{c}evi\'{c}, Department of Mathematics \& Statistics, Haverford College, Haverford, PA, USA}
\email{emilicevic@haverford.edu}

\address{Rebecca Patrias, 
LaCIM, Universit\'e du Qu\'ebec \`a Montr\'eal \\
Montr\'eal (Qu\'ebec), Canada}
\email{patriasr@lacim.ca}

\address{Bridget Eileen Tenner, 
Department of Mathematical Sciences, 
DePaul University, Chicago, IL, USA}
\email{bridget@math.depaul.edu}

\keywords{reduced word, braid class, commutation class, permutation}

\subjclass[2010]{Primary: 05A05; %enumerative combinatorics: words, permutations, matrices
Secondary: 05E15%combinatorial aspects of groups and algebras
}

\begin{abstract}
We obtain an upper and lower bound for the number of reduced words for a permutation in terms of the number of braid classes and the number of commutation classes of the permutation. We classify the permutations that achieve each of these bounds, and enumerate both cases.
\end{abstract}

\maketitle

\section{Introduction}

The symmetric group $S_n$ is the Coxeter group generated by the adjacent transpositions $\{s_1,\ldots,s_{n-1}\}$.  Every $w \in S_n$ can be expressed as a minimal-length product of these generators, and a given permutation may have more than one such representation. These representations are \emph{reduced expressions} for $w$, each of which can be encoded as a \emph{reduced word} for $w$. The set of reduced words $R(w)$ possesses a rich combinatorial structure that has been studied from many different perspectives. For example, Stanley showed that the number $|R(w)|$ of reduced words for $w$ can be calculated in terms of Young tableaux of particular shapes \cite{stanley1984}, but this is not an easy value to calculate outside of special cases. Another common technique explores various quotients of $R(w)$ under the relations governing the adjacent transpositions.  For example, the reduced words ${\bf u}$ and ${\bf v}$ for $w \in S_n$ are in the same \emph{commutation class} if we can obtain ${\bf u}$ from
${\bf v}$ by applying a sequence of commutation relations of the form $s_is_j=s_js_i$ for
$|i-j|>1$. The \emph{braid classes} of $w$ are defined similarly, in terms of the braid
relation $s_is_{i+1}s_i=s_{i+1}s_is_{i+1}$.   In this paper, we give upper and lower bounds for $|R(w)|$, in terms of the number of braid and commutation classes of the permutation $w$.

Quotients of $R(w)$ under these Coxeter relations have been studied before, but almost all of that work has focused on the commutation classes, with very little attention paid to the braid classes. In \cite{elnitsky1997rhombic}, Elnitsky presented
a bijection from the commutation classes of $w$ to rhombic tilings of a particular
polygon that depends on $w$, which Tenner utilized extensively in \cite{tenner-rdpp}.  Jonsson and
Welker \cite{jonsson-welker} considered certain cell complexes where
the commutation classes appear. Bergeron, Ceballos, and
Labb\'e \cite{bergeron2015fan} studied a similar scenario, but their work does consider quotienting out by more general Coxeter relations. Meng \cite{meng} studied both the number of commutation classes and their network relationships, and B{\'e}dard developed recursive formulas for the number of reduced words in each commutation class in \cite{bedard}, as well as more detailed statistics for the case of the longest permutation $w_0$.  Such focus on certain fixed elements has also been a productive approach to studying the set of reduced words. For example, the higher Bruhat order $B(n,2)$, studied by Manin and Schechtman \cite{manin-schechtman} and Ziegler \cite{ziegler} is a partial order on the commutation classes of $w_0$.

Another important family of permutations for which the study of the commutation classes has been especially fruitful are the fully commutative elements, which are those having a single commutation class. In their study of Schubert polynomials, Billey, Jockusch, and Stanley
\cite{billey-jockusch-stanley} showed that fully commutative
permutations are $321$-avoiding.  Stembridge investigated and enumerated the fully commutative elements in all Coxeter groups
in \cite{stembridge96,stembridge97,stembridge98}, and he provided further
examples of their appearance in algebra.  More recently, Green and
Losonczy \cite{green-losonczy2002} and Hanusa and Jones
\cite{hanusa-jones} have worked on adaptations of
full commutativity to root systems and affine permutations, respectively. 

The equivalence classes of $R(w)$ under the braid relations, on the other hand, have been noticeably less well studied, both in terms of how they partition
the collection of reduced words, and as objects of structure
themselves.  Zollinger \cite{zollinger1994equivalence} used an encoding of the reduced words to provide formulas for the size of the braid classes, and the previously cited work in \cite{bergeron2015fan} showed that the graph on braid classes, with edges indicating when an element of one class can be transformed into an element of the other class by a commutation move, is bipartite.

As with commutation classes, focusing on a particular element in $S_n$, such as the longest permutation $w_0$, has been a fruitful approach to understanding some of the relevance and influence of braid moves in reduced words.  Reiner and Roichman \cite{reiner-roichman} defined a graph whose vertices are the elements of $R(w_0)$ with edges indicating braid relations, and they calculate the diameter of this graph.  Building on Reiner's proof  \cite{reiner05} that the expected number of braid moves in a random element of $R(w_0)$ equals one, Schilling, Thi\'ery, White, and Williams \cite{schilling-etal} extend this probabilistic result to the case of one particular commutation class of reduced words for the longest element.

Although braid classes and commutation classes are highly related, recognition of this fact has thus far been under-utilized in the literature. In this paper, we seek to remedy that by studying the relationship between the number of reduced words of a permutation and the numbers of commutation classes and braid classes that it has.  Our driving principle is to utilize the fact that commutation
classes and braid classes are partitions of the same set. This allows us to leverage one against the other when studying them in tandem. Theorem \ref{thm:bounding |R|} illustrates this approach by providing upper and lower bounds on $|R(w)|$ in terms of the number of braid and commutation classes.  More precisely,
$$|B(w)| + |C(w)| - 1 \le |R(w)| \le |B(w)| \cdot |C(w)|,$$
where we define this notation in the next section. We demonstrate that these bounds are sharp by characterizing in Proposition~\ref{prop:characterizing the upper bound} (and Corollary~\ref{cor:characterizing upper bound}) and Proposition~\ref{prop:characterizing lower bound achievers} those permutations that achieve the upper and lower bounds, respectively.  Those collections of permutations are enumerated in Corollaries~\ref{cor:upper bound enumeration} and~\ref{cor:lower bound enumeration}.  In Conjecture \ref{conj:lower bound}, we suggest an alternate approach to studying these relationships in terms of intervals in the weak order, rather than working directly with the Coxeter relations on reduced words.

We focus on reduced words for elements of the symmetric group,  and Section~\ref{sec:preliminaries} provides the required background on Coxeter groups, as well as the relevant terminology and notation used throughout the paper.  In addition, we recall facts about the graph with vertex set $R(w)$ and edges indicating a single commutation or braid move, and we discuss initial results about the quantity and structure of braid classes.
This sets the stage for Section~\ref{sec:bounds}, which contains the majority of our results, including those discussed above. The paper concludes in Section~\ref{sec:conjecture} with a proposed connection to aspects of intervals in the weak order.

\subsection*{Acknowledgements} This project began at the ``Algebraic Combinatorixx 2'' workshop at the Banff International Research Station in May 2017. We thank the organizers and staff for putting together such a wonderful conference, and thank Kyoungsuk Park for early discussions.  The computations in this paper were performed by using Maple$^{\text{TM}}$, a trademark of Waterloo Maple Inc.~~John Stembridge's \textsf{posets} and \textsf{coxeter} packages for Maple were essential.

\section{Braid and commutation classes in Coxeter groups}
\label{sec:preliminaries}

We begin by briefly discussing relevant background from the theory of Coxeter groups in Section \ref{S:Coxeter}. For further background on Coxeter groups, we refer the reader to \cite{bjorner-brenti}.   In Section \ref{S:ClassDefs}, we define the braid and commutation classes in the symmetric group, and establish terminology related to braid and commutation moves. In Section \ref{sec:initialobservations}, we review a graphical representation for the braid and commutation classes and recall some of its properties.  We then record, in Section \ref{S:EnumObs}, initial known or straightforward results enumerating braid and commutation classes.

\subsection{Coxeter groups}\label{S:Coxeter}

A \textit{finitely generated Coxeter group} is a group $W$ with a presentation of the form
\[\langle s\in S \mid s^2=1 \text{ for all }s\in S\text{ and }(st)^{m(s,t)}=1 \text{ for all }s,t\in S\rangle,\]
where $S$ is a finite set and $m(s,t)=m(t,s)\in \{2,3,\ldots\}\cup\{+\infty\}$. The elements of $S$ are the \textit{simple reflections}. Because the simple reflections generate the group, any element of $W$ may be written as a product $w=s_{i_1}s_{i_2}\cdots s_{i_k}$, where each $s_{i_j}\in S$. When this $k$ is minimal, it is the \textit{length} of $w$, and this minimal value of $k$ is denoted $\ell(w)$. An expression of $w$ as $s_{i_1}s_{i_2}\cdots s_{i_{\ell(w)}}$ is a \textit{reduced expression} for $w$, and the word formed by the indices $i_1i_2\cdots i_{\ell(w)}$ is a \textit{reduced word} for $w$.  The set of reduced words for $w\in W$ is denoted $R(w)$, and a word formed by taking an ordered subset of the indices in a reduced word $i_1i_2\cdots i_{\ell(w)}$ for $w$, namely $i_{j_1}i_{j_2} \cdots i_{j_m}$ where $1 \leq j_1< \cdots < j_m \leq \ell(w)$, is a \textit{subword}.  A \emph{factor} is a subword in which those indices are consecutive.

In this work, we focus on finite Coxeter groups of type $A$. We should note, however, that the objects in this paper (such as length, reduced expressions, and the commutation and braid classes defined below) have analogues in other types. Indeed some of the results cited below have been proved in that broader context.  The Coxeter group of type $A_{n-1}$ is generated by the simple reflections $S=\{s_1,\ldots,s_{n-1}\}$ subject to the Coxeter relations:
\begin{itemize}
\item[R1.] $s_i^2=1$ for all $i\in[n-1]$,
\item[R2.] $(s_is_{i+1})^3=1$ for all $i\in [n-2]$, and 
\item[R3.] $(s_is_j)^2=1$ for $|i-j|>1$,
\end{itemize}
where $[m]$ denotes the set $\{1, 2, \dots, m\}$. This is, of course, a presentation of the symmetric group $S_{n}$, where the simple reflection $s_i$ is the adjacent transposition swapping $i$ and $i+1$.  Most often, we will express elements of this Coxeter group as reduced words, but occasionally we also use the \textit{window} or \textit{one-line} notation, which records the action of the permutation $w \in S_n$ on the elements of $[n]$ as follows $w = [w(1)\ w(2)\ \cdots \ w(n)]$. A pair $(i, j)$ with $i < j$ is an \textit{inversion} of $w$ if $w(i)>w(j)$, and the length $\ell(w)$ is also equal to the number of inversions in $w$.

Relation R2 is the \textit{braid relation} and can be reformulated as \[s_is_{i+1}s_i=s_{i+1}s_is_{i+1}.\] Relations of the form R3 are \textit{commutation relations}: $s_is_j=s_js_i$ when $|i-j|>1$. We refer to an application of a braid relation as a \textit{braid move}, an application of a commutation relation as a \textit{commutation move}, and an application of either relation as a \textit{Coxeter move}.

\subsection{Braid and commutation classes}\label{S:ClassDefs}

For a fixed permutation $w$, the set $R(w)$ can be partitioned by either of the two Coxeter moves, forming the classes
\begin{align*}
C(w) &:= R(w)/(ij\sim ji) \text{ when } |i-j| > 1, \text{ and}\\
B(w) &:= R(w)/(i(i+1)i \sim (i+1)i(i+1)).
\end{align*}
The first of these, $C(w)$, denotes the \emph{commutation classes} of $R(w)$, and the second, $B(w)$, denotes the \emph{braid classes}. That is, two elements of some $C \in C(w)$ can be obtained from each other by a sequence of commutation moves, whereas elements of some $B \in B(w)$ can be obtained from each other by a sequence of braid moves.

\begin{example}
Consider $[25314] \in S_5$. Then
\begin{align*}
R([25314]) &= \{12432, 14232, 41232, 14323, 41323, 43123\}\\
C([25314]) &= \big\{ \{12432,14232,41232\}, \ \{14323,41323,43123\} \big\}\\
B([25314]) &= \big\{ \{12432\}, \ \{14232, 14323\}, \ \{41232, 41323\}, \ \{43123\} \big\}.
\end{align*}
\end{example}

Consider a reduced word $\mathbf{u}=u_1\cdots u_k$. Let $\b_i$ denote the braid relation that affects the subword $u_{i-1}u_iu_{i+1}$ in the case when $u_{i-1} = u_{i+1}$ and $|u_{i\pm1}-u_i| = 1$, and acts as the identity otherwise. Let $\c_i$ denote the commutation relation that swaps $u_i$ with $u_{i+1}$ if $|u_i - u_{i+1}| > 1$, and acts as the identity otherwise.

\begin{definition}
If $\b_i(\mathbf{u}) \neq \mathbf{u}$ then $\mathbf{u}$ \emph{supports} a braid move in position $i$; equivalently, the subword $u_{i-1}u_iu_{i+1}$ \emph{supports} a braid move. If $\c_i(\mathbf{u}) \neq \mathbf{u}$ then $\mathbf{u}$ \emph{supports} a commutation move in position $i$; equivalently, the subword $u_iu_{i+1}$ \emph{supports} a commutation move. 
\end{definition}

\begin{example}
We have $\b_4(14232) = 14323$ and $\c_2(14232) = 12432$, while $\b_2(14232) = 14232$ because the factor $142$ does not support a braid move, and $\c_4(14232) = 14232$ because the factor $32$ does not support a commutation move.
\end{example}

It is sometimes relevant to know whether a pair of Coxeter moves can act on intersecting factors in a reduced word. Note that it is impossible for a reduced word to support both $\b_i$ and $\b_{i+1}$.

\begin{definition}
If a reduced word $\mathbf{u}$ supports both $\b_i$ and $\b_j$ for $i \neq j$, then $\b_i$ and $\b_j$ act as an \emph{overlapping pair} if $|i-j| = 2$, and act \emph{independently} otherwise. 
If a reduced word $\mathbf{u}$ supports both $\c_i$ and $\c_j$ for $i \neq j$, then $\c_i$ and $\c_j$ act as an \emph{overlapping pair} if $|i-j| = 1$, and act \emph{independently} otherwise. 
\end{definition}

\begin{example}
In the reduced word $1216343 \in R([3254176])$, the braid moves $\b_2$ and $\b_6$ act independently, and the commutation moves $\c_3$ and $\c_4$ act as an overlapping pair.
\end{example}

For any permutation $w$, the braid moves supported by a reduced word $\mathbf{u} \in R(w)$ must either be entirely disjoint, or may overlap in a single letter. Because commutations are not allowed within a braid class $B \in B(w),$ this forces a certain structure on the elements of $B$, which has been studied by Zollinger \cite{zollinger1994equivalence} and also by Bidari and Ernst \cite{Ernst}. 

\subsection{Graphical representations}\label{sec:initialobservations}

The reduced words of a permutation $w$ can be drawn as the vertices of a graph $G(w)$, where two words are connected by a \emph{commutation edge}, or \emph{c-edge}, if they differ by a commutation move, and by a \emph{braid edge}, or \emph{b-edge}, if they differ by a braid move. 

\begin{example}\label{ex:graph for 25314}
In the following graph of reduced words for the permutation $[25314]$, c-edges are indicated by solid lines and b-edges are indicated by dashed lines.
$$\begin{tikzpicture}
\draw (-3,0) node {$G([25314]) = $};
\draw[ultra thick,dashed] (2.2,-.2) -- (2.8,-.8);
\draw[ultra thick,dashed] (3.8,.2) -- (3.2,.8);
\draw[ultra thick] (.6,0) -- (1.4,0);
\draw[ultra thick] (3.8,-.2) -- (3.2,-.8);
\draw[ultra thick] (2.2,.2) -- (2.8,.8);
\draw[ultra thick] (4.6,0) -- (5.4,0);
\node[fill,white,ellipse,minimum size=.5cm, inner sep=1pt] at  (0,0) {$12432$};
\node[fill,white,ellipse,minimum size=.5cm, inner sep=1pt] at  (2,0) {$14232$};
\node[fill,white,ellipse,minimum size=.5cm, inner sep=1pt] at  (3,-1) {$14323$};
\node[fill,white,ellipse,minimum size=.5cm, inner sep=1pt] at  (3,1) {$41232$};
\node[fill,white,ellipse,minimum size=.5cm, inner sep=1pt] at  (4,0) {$41323$};
\node[fill,white,ellipse,minimum size=.5cm, inner sep=1pt] at  (6,0) {$43123$};
\node[draw,ellipse,minimum size=.5cm, inner sep=1pt] at  (0,0) {$12432$};
\node[draw,ellipse,minimum size=.5cm, inner sep=1pt] at  (2,0) {$14232$};
\node[draw,ellipse,minimum size=.5cm, inner sep=1pt] at  (3,-1) {$14323$};
\node[draw,ellipse,minimum size=.5cm, inner sep=1pt] at  (3,1) {$41232$};
\node[draw,ellipse,minimum size=.5cm, inner sep=1pt] at  (4,0) {$41323$};
\node[draw,ellipse,minimum size=.5cm, inner sep=1pt] at  (6,0) {$43123$};
\end{tikzpicture}$$
\end{example}

The graph $G(w)$ has nice properties, many of which were studied by Reiner and Roichman in \cite{reiner-roichman}.  We will use the fact that this graph is connected in a critical way, and so we record this property separately in the following theorem.

\begin{theorem}[\cite{Matsumoto, Tits}]\label{thm:connected graph}
The graph $G(w)$ is connected.
\end{theorem}

Let $G_c(w)$ be the graph that results from contracting the c-edges of $G(w)$, and let $G_b(w)$ be the graph that results from contracting the b-edges of $G(w)$. The vertices of these two new graphs correspond to the commutation classes $C(w)$ and the braid classes $B(w)$, respectively. Two commutation (respectively, braid) classes are adjacent if an element of one can be obtained from an element of the other by a braid (respectively, commutation) move.

\begin{example}
$$\raisebox{.5in}{\begin{tikzpicture}
\draw (-4,0) node {$G_c([25314]) = $};
\draw[ultra thick,dashed] (0,.7) -- (0,-.7);
\node[fill,white,ellipse,minimum size=.5cm, inner sep=1pt] at (0,.7) {$\{12432,14232,41232\}$};
\node[fill,white,ellipse,minimum size=.5cm, inner sep=1pt] at (0,-.7) {$\{14323,41323,43123\}$};
\node[draw,ellipse,minimum size=.5cm, inner sep=1pt] at  (0,.7) {$\{12432,14232,41232\}$};
\node[draw,ellipse,minimum size=.5cm, inner sep=1pt] at  (0,-.7) {$\{14323,41323,43123\}$};
\end{tikzpicture}}\hspace{.2in}\begin{tikzpicture}
\draw (-4,0) node {$G_b([25314]) = $};
\draw[ultra thick] (0,2.1) -- (0,-2.1);
\node[fill,white,ellipse,minimum size=.5cm, inner sep=1pt] at (0,2.1) {$\{12432\}$};
\node[fill,white,ellipse,minimum size=.5cm, inner sep=1pt] at (0,.7) {$\{14232,14323\}$};
\node[fill,white,ellipse,minimum size=.5cm, inner sep=1pt] at (0,-.7) {$\{41232,41323\}$};
\node[fill,white,ellipse,minimum size=.5cm, inner sep=1pt] at (0,-2.1) {$\{43123\}$};
\node[draw,ellipse,minimum size=.5cm, inner sep=1pt] at (0,2.1) {$\{12432\}$};
\node[draw,ellipse,minimum size=.5cm, inner sep=1pt] at (0,.7) {$\{14232,14323\}$};
\node[draw,ellipse,minimum size=.5cm, inner sep=1pt] at (0,-.7) {$\{41232,41323\}$};
\node[draw,ellipse,minimum size=.5cm, inner sep=1pt] at (0,-2.1) {$\{43123\}$};
\end{tikzpicture}$$
\end{example}

In general, the graphs $G_c(w)$ and $G_b(w)$, along with their analogues in other types, share a certain structural feature.

\begin{theorem}[{\cite[Theorem 3.1]{bergeron2015fan}}]\label{thm:bipartite graphs}
The graphs $G_c(w)$ and $G_b(w)$ are bipartite.
\end{theorem}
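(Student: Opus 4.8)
The plan is to exhibit, for each of the two graphs, an explicit proper $2$-coloring, since a graph is bipartite precisely when its vertices admit a $2$-coloring with no monochromatic edge. Equivalently, for each graph I will produce a $\mathbb{Z}/2\mathbb{Z}$-valued statistic that is constant on the vertices (the classes) but changes parity across every edge (every move). The two cases are dual to one another, but they require genuinely different statistics, and only one of them is delicate.

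For $G_c(w)$, whose vertices are commutation classes and whose edges are braid moves, the natural statistic is the parity of the letter sum $f(\mathbf{u}) = \sum_j u_j$. A commutation move only transposes two letters, so $f$ is constant on each commutation class and hence descends to a well-defined function on the vertices of $G_c(w)$. A braid move replaces a factor $i(i+1)i$ by $(i+1)i(i+1)$, changing $f$ by exactly $1$; thus every braid edge joins commutation classes of opposite $f$-parity, and $f \bmod 2$ is a proper $2$-coloring.

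For $G_b(w)$ the roles are reversed: vertices are braid classes and edges are commutation moves, so I need a statistic invariant under braid moves but flipping under commutation moves. The letter sum fails here (braids change it), and, more subtly, the naive parity of the number of inversions of $\mathbf{u}$ as a word also fails, since a braid move can change it by an odd amount. Instead I would pass to the wiring diagram of $\mathbf{u}$: reading $\mathbf{u}$ left to right and letting each letter cross the two strands in the indicated positions records, for each crossing $p$, the unordered pair $\pi_p$ of strands that meet there. Fixing any total order $\prec$ on $2$-element subsets of $\{1,\dots,n+1\}$, define
$$g(\mathbf{u}) = \#\{(p,q) : p<q,\ \pi_p \cap \pi_q = \emptyset,\ \pi_p \succ \pi_q\},$$
the number of temporally ordered pairs of crossings that are disjoint (share no strand) and out of $\prec$-order. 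A commutation move transposes two consecutive crossings whose strand pairs are disjoint, flipping exactly the indicator for that one pair and leaving all others unchanged, so $g$ changes by $\pm 1$. A braid move reverses the temporal order of the three pairwise-overlapping crossings among a triple of strands $a<b<c$ while preserving the set of crossings and their position relative to every external crossing; since those three crossings pairwise share a strand, none of the affected pairs is counted, and $g$ is unchanged. Hence $g \bmod 2$ is constant on braid classes and flips across every commutation edge, giving a proper $2$-coloring of $G_b(w)$.

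The main obstacle is the construction and verification of the invariant $g$ for $G_b(w)$: one must recognize that a move-by-move parity argument on the word itself (for instance, via inversions) does not survive braid moves, and that the correct bookkeeping happens at the level of strand crossings. The crux is the claim that a braid move merely permutes three mutually overlapping crossings without disturbing their interaction with disjoint crossings; this is exactly where the wiring-diagram description, together with the fact that the braid move preserves the set of crossing pairs and only reverses their order, does the work. Once that is established, both bipartiteness statements follow at once from the two $2$-colorings.
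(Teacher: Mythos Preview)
The paper does not supply its own proof of this theorem; it is quoted verbatim from \cite{bergeron2015fan} and used as a black box. Your argument, by contrast, is a complete and correct self-contained proof.

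Both $2$-colorings you propose work. For $G_c(w)$ the letter-sum parity is standard and immediate. For $G_b(w)$ your wiring-diagram invariant $g$ is the right idea, and the verification is sound: a commutation move transposes two adjacent disjoint crossings, flipping exactly one counted pair; a braid move reverses the order of three crossings on a common triple $\{a,b,c\}$ of strands, so those crossings pairwise overlap and never appear in $g$, while every external crossing $\pi_q$ sees the same multiset $\{\{a,b\},\{a,c\},\{b,c\}\}$ on the same side temporally, leaving $g$ unchanged. The key observation that the partial product $s_{u_1}\cdots s_{u_{q-1}}$ is preserved by the braid relation for $q$ outside the braid window is exactly what guarantees external crossings are unaffected.

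One small expository remark: your invariant $g$ is essentially counting inversions in the sequence of inversions of $w$ (restricted to disjoint pairs), which is closely related to the higher Bruhat order $B(n,2)$ mentioned in the paper's introduction. The proof in \cite{bergeron2015fan} works in arbitrary Coxeter type via a parity statistic built from the root-sequence interpretation of reduced words; your type-$A$ argument via strand crossings is the concrete specialization of that picture, so the approaches are cognate rather than genuinely different.
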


\subsection{Enumerative observations}\label{S:EnumObs}

Recall that the goal of this paper is to leverage the fact that braid classes and commutation classes are partitions of the same set, so that we can study them simultaneously in relation to the collection of reduced words of a permutation. The first step in this process is to recognize that braid moves and commutation moves are, in a sense, orthogonal to each other. Proposition \ref{prop:braids and commutations are different} recovers the result \cite[Corollary 10]{zollinger1994equivalence}, proved here in a different manner.

\begin{proposition}\label{prop:braids and commutations are different}
Fix a permutation $w$. For all $B \in B(w)$ and $C \in C(w)$,
$$|B \cap C| \le 1.$$
\end{proposition}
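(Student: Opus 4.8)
The plan is to show that $|B \cap C| \le 1$ by arguing that if two distinct reduced words $\mathbf{u}, \mathbf{v}$ lie in the same braid class $B$ and the same commutation class $C$, we reach a contradiction. Since $\mathbf{u}$ and $\mathbf{v}$ are in the same braid class, they are related by a sequence of braid moves; since they are in the same commutation class, they are related by a sequence of commutation moves. The intuition is that braid moves and commutation moves affect a reduced word in ``orthogonal'' ways, so no nontrivial word can be reached by both types of transformation.

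The key invariant I would exploit is the multiset of letters appearing in a reduced word. A commutation move $\c_i$ only transposes two letters $u_i, u_{i+1}$ with $|u_i - u_{i+1}| > 1$, so it \emph{preserves the multiset of letters} of $\mathbf{u}$. By induction on the length of a commutation sequence, any two words in the same commutation class $C$ have identical letter multisets. On the other hand, a braid move replaces a factor $a(a\pm1)a$ by $(a\pm1)a(a\pm1)$, which \emph{changes} the multiset: it decreases the multiplicity of $a$ by one and increases the multiplicity of $a\pm1$ by one. Hence any single braid move strictly alters the letter multiset.

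Putting these together: suppose $\mathbf{u}, \mathbf{v} \in B \cap C$ with $\mathbf{u} \neq \mathbf{v}$. Because they lie in the same commutation class, they have the same letter multiset. Because they lie in the same braid class and are distinct, they are joined by a nonempty sequence of braid moves. I would argue that this sequence cannot return to the original multiset: tracking the multiplicity of each letter, each braid move shifts one unit of multiplicity between adjacent values $a$ and $a\pm1$, and I would show that no sequence of such shifts can be a nontrivial loop on the multiset of a \emph{single} braid class. The cleanest way to see this is to recall from Proposition~\ref{prop:sizes and structure of braid classes} that $B$, viewed as a subgraph of $G(w)$, is a product of path graphs on the independent and overlapping braid factors; along any such path, each braid factor toggles between its finitely many presentations, and distinct vertices of the product correspond to distinct choices of presentation at the braid sites. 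Since the different presentations at a single braid site have different letter multisets, distinct vertices of $B$ have distinct letter multisets, so two elements of $B$ with equal multisets must coincide.

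The main obstacle I anticipate is making the multiset argument rigorous across an \emph{overlapping} pair of braid moves, where the factor $a(a\pm1)a(a\mp1)a$ and its three presentations interact in a more delicate way than the isolated factor case. I would need to verify that even in this case the three presentations listed in Proposition~\ref{prop:sizes and structure of braid classes} have pairwise distinct letter multisets, which is a short finite check. Once the letter multiset is confirmed to be an injective invariant on the vertices of each braid class $B$, the proposition follows immediately, since equality of multisets (forced by shared commutation class membership) then forces equality of the words.
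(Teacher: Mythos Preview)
Your multiset invariant is not fine enough to separate distinct elements of a braid class. The inference ``since the different presentations at a single braid site have different letter multisets, distinct vertices of $B$ have distinct letter multisets'' is where the argument breaks: multiset changes coming from \emph{different} braid sites can cancel. Concretely, take $w=[45312]\in S_5$ and the reduced word $\mathbf{u}=23214323$. This word supports exactly the two independent braid moves $\b_2$ and $\b_7$, so its braid class is
\[
\{\,23214323,\ 32314323,\ 23214232,\ 32314232\,\},
\]
a copy of $P_2\times P_2$. Toggling the first factor $232\to323$ changes the multiset by $e_3-e_2$, while toggling the second factor $323\to232$ changes it by $e_2-e_3$; doing both gives net change zero. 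Indeed $23214323$ and $32314232$ are distinct elements of the same braid class with identical letter multiset $\{1,2,2,2,3,3,3,4\}$, so your proposed invariant does not distinguish them. (They are, of course, in different commutation classes, as the proposition guarantees.) The obstacle you anticipated, overlapping pairs, is not the issue; the issue is several \emph{independent} factors using the same pair of letters with opposite orientations.

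The paper's proof replaces the multiset by a sharper commutation invariant: for each $i$, the \emph{ordered} subword obtained by retaining only the letters $i$ and $i+1$. Commutation moves preserve every such subword. If $\mathbf{u}\neq\mathbf{v}$ lie in the same braid class, let $p$ be the leftmost position at which they disagree; by the product-of-paths description in Proposition~\ref{prop:sizes and structure of braid classes}, the letters $u_p$ and $v_p$ are consecutive integers $c$ and $c\pm1$, and since $\mathbf{u}$ and $\mathbf{v}$ agree on positions $1,\ldots,p-1$, their $\{c,c+1\}$-subwords first differ at the entry coming from position $p$. Hence $\mathbf{u}$ and $\mathbf{v}$ cannot share a commutation class. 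Your multiset is the unordered shadow of this invariant, and it is exactly the ordering information that prevents the cancellation in the example above.
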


\begin{proof}
Suppose that $\mathbf{u}\neq \mathbf{v}$ are two reduced words for $w$ and that $\mathbf{u}$ and $\mathbf{v}$ are in the same braid class. Since these two words are distinct, 
there is a leftmost letter in which they differ. By assumption, the word $\mathbf{v}$ can be obtained from $\mathbf{u}$ by a sequence of braid moves, which means that this leftmost difference must be $i$ in one of the words and $i+1$ in the other, for some $i$. Therefore, the subword obtained from $\mathbf{u}$ by restricting to the letters ``$i$'' and ``$i+1$'' is not the same as the subword obtained from $\mathbf{v}$ using this same restriction. Since commutation relations cannot change the subword formed by reading only the occurrences of $i$ and $i+1$, the reduced words $\mathbf{u}$ and $\mathbf{v}$ are not in the same commutation class.
\end{proof}

We can classify all permutations that fall into the extreme cases when either $|C(w)| = 1$ or $|B(w)| = 1$, and these classifications will be relevant in the next section. We list these classifications as Propositions \ref{prop:one commutation class} and \ref{prop:one braid class} below.

If $|C(w)|=1$, then by definition of $C(w)$, any reduced expression for $w$ can be obtained from any other by commutation moves.  Stembridge defines such $w$ as \emph{fully commutatitve} elements \cite{stembridge96}. Characterizing those $w$ such that $|C(w)|=1$ can also be done through a pattern avoidance property. In order to state this property, we recall that a permutation $w \in S_n$ is \textit{$321$-avoiding} if there is no triple of indices, $i < j<k$, for which $w(i)>w(j)>w(k)$.

\begin{proposition}[{\cite[Theorem 2.1]{billey-jockusch-stanley}}]\label{prop:one commutation class}
$|C(w)| = 1$ if and only if $w$ is $321$-avoiding.
\end{proposition}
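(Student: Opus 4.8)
The statement contains two equivalences. The first, that $|C(w)| = 1$ if and only if $w$ is fully commutative, is immediate from the definition of full commutativity as the property of having a single commutation class, so the real content is the equivalence with $321$-avoidance. My plan is to first reformulate the condition $|C(w)|=1$ in terms of braid moves, and then to translate the presence of a braid move into the presence of a $321$ pattern using wiring diagrams.

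For the reformulation, I claim that $|C(w)| = 1$ if and only if no reduced word in $R(w)$ supports a braid move. One direction I would argue by contrapositive: if some $\mathbf{u} \in R(w)$ supports $\b_i$, then $\mathbf{u}$ and $\b_i(\mathbf{u})$ are distinct reduced words lying in different commutation classes by Proposition~\ref{prop:braids and commutations are different}, so $|C(w)| \ge 2$. Conversely, if no reduced word supports a braid move, then $G(w)$ has no b-edges; since $G(w)$ is connected by Theorem~\ref{thm:connected graph}, it is connected using c-edges alone, and all of $R(w)$ forms a single commutation class. It then suffices to prove that some reduced word of $w$ supports a braid move if and only if $w$ contains a $321$ pattern.

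To handle this combinatorial claim, I would encode each reduced word as a wiring diagram on $n$ strands, where the letter $a$ crosses the two strands currently occupying adjacent levels $a$ and $a+1$. Because the word is reduced, every pair of strands crosses at most once, and two strands cross exactly when their endpoints form an inversion; consequently a triple of positions $p<q<r$ with $w(p)>w(q)>w(r)$ corresponds precisely to three strands that pairwise cross. For the easy direction, a braid factor $a(a\pm1)a$ consists of three crossings among three strands at adjacent levels, and a direct bookkeeping of the three consecutive swaps shows these crossings realize exactly the three distinct pairs; hence those strands pairwise cross and form a $321$ pattern. For the converse, given three pairwise-crossing strands witnessing a $321$ pattern, I would select an \emph{innermost} such triple (minimizing the positional span $r-p$, and then the value span $w(p)-w(r)$) and apply commutation moves to slide their three required crossings together until they occupy three consecutive positions at adjacent levels, yielding a braid factor $s_a s_{a+1} s_a$.

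I expect the main obstacle to be precisely this last contraction step: showing that for an innermost pairwise-crossing triple, no fourth strand is forced to pass through the triangle they form, so that the three crossings really can be commuted into consecutive adjacent position. The minimality of the chosen triple is what should exclude such obstructing strands, and making that exclusion rigorous is the crux of the argument; by contrast, the reduction through connectivity and Proposition~\ref{prop:braids and commutations are different} is routine given the cited results.
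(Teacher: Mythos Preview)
The paper does not supply its own proof of this proposition: it is stated with a citation to Billey--Jockusch--Stanley, and the sentence introducing it remarks that the first equivalence ``is essentially a definition.'' So there is nothing in the paper to compare your argument against; you are proving a result the authors simply import.

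That said, your outline is sound. The reformulation step---$|C(w)|=1$ if and only if no reduced word supports a braid move---is correct and follows exactly as you say from Theorem~\ref{thm:connected graph} and Proposition~\ref{prop:braids and commutations are different}. The forward direction of the combinatorial claim (a braid factor forces three pairwise-crossing strands, hence a $321$ pattern) is routine. For the converse, the innermost-triple strategy can be made rigorous, but you are right that excluding a fourth obstructing strand is the delicate point, and your minimality criterion (positional span, then value span) would need to be checked carefully against all the ways a fourth strand can enter the triangle. Published proofs of this direction often sidestep the geometric contraction entirely---for example by induction on length, or by showing directly that a $321$ pattern forces a reduced word ending (or beginning) in a braid factor---and you may find one of those routes cleaner to complete than the wiring-diagram compression you sketch.
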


Characterizing those $w$ such that $|B(w)| = 1$ is also straightforward because this strict condition requires that any two reduced words be related by braid moves and that no commutation moves are ever possible.

\begin{proposition}\label{prop:one braid class}
$|B(w)| = 1$ if and only if $w$ has a reduced word of the form $i(i+1)i$, or of the form $i(i+\varepsilon)(i+2\varepsilon)\cdots(i+k\varepsilon)$ for some $\varepsilon \in \{\pm 1\}$ and $k \ge 0$; equivalently, if and only if any two inversions in $w$ share a letter.
\end{proposition}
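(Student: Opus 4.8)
The plan is to route the argument through the cleaner intermediate condition
$(\ast)$: \emph{no reduced word of $w$ supports a commutation move}, and to establish $|B(w)|=1 \iff (\ast)$ first. If $|B(w)|=1$, then $R(w)$ is a single braid class $B$, and Proposition~\ref{prop:braids and commutations are different} forces every commutation class to meet $B$ in at most one word; hence each commutation class is a singleton, so no $\c_i$ is ever available. Conversely, if no commutation move is supported, then $G(w)$ has no c-edges, and since $G(w)$ is connected (Theorem~\ref{thm:connected graph}) its unique component is a single braid class. (The identity is a trivial base case, handled separately, and throughout I take $w \neq e$.)

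Next I would extract the two word forms from $(\ast)$. Under $(\ast)$ every reduced word has consecutive letters differing by exactly $1$, since equal adjacent letters cannot occur in a reduced word and a gap larger than $1$ would support a $\c_i$. The crux is a short local computation. Suppose a reduced word $\mathbf{u}$ supports $\b_t$, so $u_{t-1}u_tu_{t+1}=a(a{+}\delta)a$ with $\delta\in\{\pm1\}$, and suppose $u_{t+2}$ exists. Because no reduced word supports both $\b_t$ and $\b_{t+1}$, we cannot have $u_{t+2}=a{+}\delta$, so $u_{t+2}=a{-}\delta$; applying $\b_t$ rewrites the factor as $(a{+}\delta)a(a{+}\delta)$, creating the adjacent pair $(a{+}\delta)(a{-}\delta)$ whose letters differ by $2$ and thus support $\c_{t+1}$, contradicting $(\ast)$. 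The mirror argument excludes $u_{t-2}$. Hence a braid site can have no neighbors: $\ell(w)=3$ and $w$ has a reduced word of the form $i(i+1)i$. If instead no reduced word supports any braid move, then no Coxeter move is available at all, so by Theorem~\ref{thm:connected graph} there is a unique reduced word; a $\pm1$-step word with no peak or valley (either would be a braid pattern $a(a\pm1)a$) is strictly monotone, giving the staircase form $i(i+\varepsilon)\cdots(i+k\varepsilon)$.

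For the reverse implication I would verify each form satisfies $(\ast)$, hence $|B(w)|=1$: the word $i(i+1)i$ presents $s_is_{i+1}s_i$, whose reduced words are exactly $i(i+1)i$ and $(i+1)i(i+1)$, joined by one braid move with no commutation possible; a staircase word has no repeated letter (no braid move) and consecutive letters differing by $1$ (no commutation), so it is the unique reduced word. Finally, to recover the inversion description, I would compute that the staircase form displaces a single value cyclically across an interval, so all its inversions share a common endpoint position, while $i(i+1)i$ has precisely the three inversions on positions $i,i+1,i+2$, which pairwise intersect. For the converse, I would invoke the classical dichotomy that a pairwise-intersecting family of $2$-element sets is either a star (a common element) or a triangle, and translate each case back into a staircase or an $i(i+1)i$, respectively.

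I expect the main obstacle to be this last translation: showing that a \emph{star} of inversions forces every other entry of $w$ to occupy its sorted position, so that $w$ is exactly a cyclic shift on an interval and therefore has a staircase reduced word, and that a genuine \emph{triangle} of inversions forces three consecutive descending values in three consecutive positions. The braid-gap computation is the conceptual heart but is short; the bookkeeping that makes the inversion dichotomy match the reduced-word forms exactly (including the degenerate cases $\ell(w)\le 2$, where $w$ is a single generator) is where the care is needed.
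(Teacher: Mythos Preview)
Your proposal is correct and follows essentially the same skeleton as the paper's proof: use Proposition~\ref{prop:braids and commutations are different} (together with connectivity of $G(w)$) to reduce $|B(w)|=1$ to the condition that no reduced word supports a commutation move, then deduce the two word shapes from the constraint that adjacent letters differ by exactly~$1$.

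Where you differ is in the level of justification. The paper simply asserts that ``the only possibilities'' are the staircase and $i(i+1)i$, and likewise asserts the inversion equivalence without argument. Your local computation showing that a braid site can have no neighbor (by forcing $u_{t+2}=a-\delta$ and then producing a commutation after $\b_t$) makes the first assertion rigorous. Your treatment of the inversion condition via the star/triangle dichotomy for pairwise-intersecting $2$-element sets is a genuine addition: the paper offers nothing here, and your plan---showing that a star of inversions pins down a single displaced value on an interval (hence a staircase) while a triangle forces three consecutive positions in decreasing order (hence $i(i+1)i$)---is exactly the right bookkeeping. The only care needed, as you anticipate, is verifying that a star centered at position $p$ cannot mix inversions of the form $(i,p)$ and $(p,j)$ (since $w(i)>w(p)>w(j)$ would give a third inversion $(i,j)$), after which the remaining entries are forced into sorted position.
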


\begin{proof}
By Proposition~\ref{prop:braids and commutations are different}, reduced words for $w$ cannot support commutation moves, meaning that all letters must be consecutive in every reduced word for $w$. The only possibilities, then, are reduced words of the form
$$i(i+\varepsilon)(i+2\varepsilon)\cdots(i+k\varepsilon)$$
for $\varepsilon \in \{\pm 1\}$ and some $k \ge 0$, or
$$i(i+1)i.$$
This is equivalent to requiring that any two inversions in $w$ share a letter.
\end{proof}

\begin{example}\
\begin{enumerate}\renewcommand{\labelenumi}{(\alph{enumi})}\addtocounter{enumi}{0}
\item The permutation $[241563] \in S_6$ is $321$-avoiding. Thus it is fully commutative and $|C([241563])| = 1$. More precisely,
$$C([241563]) = \big\{ \{13245, 31245, 13425, 13452, 31425, 31452, 34125, 34152, 34512\}\big\}.$$
\item In the permutation $[124563] \in S_6$, any two inversions share at least one letter. Thus $|B([124563])| = 1$. More precisely,
$$B([124563]) = \big\{ \{345\} \big\}.$$
\end{enumerate}
\end{example}

\section{Charting reduced words and bounding sizes}
\label{sec:bounds}

Throughout this section, fix a permutation $w$. The primary tool for our investigation is an organizational scheme for the reduced words in $R(w)$.  This relies on the fact that the braid classes $B(w)$ and the commutation classes $C(w)$ give two different partitions of the collection $R(w)$ of reduced words of a permutation.

\subsection{Bounds on the number of reduced words}

In this subsection, we develop bounds and relationships among the sizes of $R(w)$, $B(w)$, and $C(w)$. These bounds are sharp, and the subsequent subsections will be devoted to characterizing when the upper and lower bounds are each achieved.

\begin{definition}
Fix a permutation $w$, and let $\mathcal{T}(w)$ be a table whose rows are indexed by $B(w)$, and whose columns are indexed by $C(w)$.  For each $\mathbf{u} \in R(w)$, if $\mathbf{u} \in B \in B(w)$ and $\mathbf{u} \in C \in C(w)$, then place $\mathbf{u}$ in row $B$ and column $C$ of the table $\mathcal{T}(w)$.
\end{definition}

Of course, the choice of order for the rows and columns of $\mathcal{T}(w)$ has an impact on the table itself, but it has no effect on the properties we will discuss (and care about) below.

\begin{example}\label{ex:chart for 25314}
Let $w = [25314]$. The braid classes and commutation classes of this element were discussed in previous examples, and we now label them as shown.
\begin{align*}
B_1 &= \{12432\}\\
B_2 &= \{14232,14323\}\\
B_3 &= \{41232,41323\}\\
B_4 &= \{43123\}\\
C_1 &= \{12432,14232,41232\}\\
C_2 &= \{14323,41323,43123\}
\end{align*}
The corresponding table, $\mathcal{T}(w)$, for elements of $R(w)$ is as follows.
$$\begin{array}{c||c|c}
\raisebox{.1in}[.2in][.1in]{}& C_1 & C_2\\
\hline
\hline
\raisebox{.1in}[.2in][.1in]{}B_1 & \ 12432 \ & \\
\hline
\raisebox{.1in}[.2in][.1in]{}B_2 & \ 14232 \ & \ 14323 \ \\
\hline
\raisebox{.1in}[.2in][.1in]{}B_3 & \ 41232 \ & \ 41323 \ \\
\hline
\raisebox{.1in}[.2in][.1in]{}B_4 & & \ 43123 \
\end{array}$$
\end{example}

As is obvious in Example~\ref{ex:chart for 25314}, some cells of $\mathcal{T}(w)$ may be empty. On the other hand, there is a limit to how many reduced words each cell might contain.

\begin{remark}\label{rem:cells have at most one element}
Proposition~\ref{prop:braids and commutations are different} can be restated as follows: each cell in $\mathcal{T}(w)$ contains at most one element.
\end{remark}

Certain features of these charts dictate how many cells must be nonempty. Since these features do not depend on the type of objects filling the cells of the charts (in our case, reduced words), we state and prove the following result in greater generality.

\begin{proposition}\label{prop:nonempty cells in tables with jump property}
Let $T$ be an $r \times c$ rectangular array of data, some of whose cells may be empty. Suppose that:
\begin{itemize}
\item each column of $T$ contains at least one nonempty cell, 
\item each row of $T$ contains at least one nonempty cell, and
\item one can get from any nonempty cell in $T$ to any other nonempty cell by a sequence of in-row or in-column jumps, always from nonempty cells to nonempty cells.
\end{itemize}
Then $T$ contains at least $c+r-1$ nonempty cells.
\end{proposition}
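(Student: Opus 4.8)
The plan is to encode the table $T$ as a bipartite graph and then reduce the claim to the standard fact that a connected graph on $m$ vertices has at least $m-1$ edges. First I would build a bipartite graph $H$ whose vertices are $r$ \emph{row-vertices} $\rho_1,\dots,\rho_r$ together with $c$ \emph{column-vertices} $\gamma_1,\dots,\gamma_c$, and for each nonempty cell in position $(i,j)$ I would insert the edge $\rho_i\gamma_j$. By construction the number of nonempty cells of $T$ is exactly the number of edges of $H$, while $H$ has precisely $r+c$ vertices. The whole argument then becomes a bound on edges of $H$ in terms of its vertices.

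Next I would translate the three hypotheses into statements about $H$. The ``each row is nonempty'' and ``each column is nonempty'' conditions say exactly that every row-vertex and every column-vertex is incident to at least one edge, so $H$ has no isolated vertices. For the connectivity hypothesis, the key observation is that two nonempty cells share a row (respectively, a column) precisely when their corresponding edges share the row-vertex $\rho_i$ (respectively, the column-vertex $\gamma_j$); thus an in-row or in-column jump between nonempty cells corresponds exactly to passing between two edges of $H$ that meet at a common vertex. The assumption that any nonempty cell reaches any other by a sequence of such jumps therefore says that the edges of $H$ are pairwise joined by paths of successively adjacent edges. Combined with the absence of isolated vertices, this means $H$ is connected as a graph on the full vertex set of size $r+c$.

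Finally I would apply the elementary bound that a connected graph on $m$ vertices has at least $m-1$ edges, since any spanning tree uses exactly $m-1$ edges and is a subgraph. Taking $m=r+c$ shows that $H$ has at least $r+c-1$ edges, and hence that $T$ has at least $c+r-1$ nonempty cells, which is the desired conclusion.

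The one genuinely delicate point is the equivalence established in the second step: one must verify that the cell-jump connectivity hypothesis yields connectivity of $H$ on \emph{all} $r+c$ vertices, and not merely connectivity of the subgraph spanned by the occupied edges. This is exactly where the per-row and per-column nonemptiness hypotheses earn their keep, because they guarantee that no row-vertex or column-vertex is left isolated, so that the spanning-tree bound may be applied to the entire vertex set simultaneously. (An essentially equivalent inductive argument is also available: start from a single nonempty cell and add nonempty cells one at a time along the jump-connectivity structure, noting that each newly reached cell introduces at most one previously unseen row-vertex or column-vertex; but the bipartite-graph formulation makes the counting most transparent.)
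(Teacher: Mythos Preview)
Your proof is correct. The bipartite-graph encoding is clean, and the step you flag as delicate---that jump-connectivity among cells together with the absence of isolated vertices yields connectivity of $H$ on all $r+c$ vertices---is handled properly.

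The paper's own proof of this proposition takes a different, more ad hoc route: it argues directly by counting, assuming $c\ge r$, noting that each column contributes at least one nonempty cell, and then arguing that the columns in which in-column jumps occur must together contain enough additional nonempty cells to cover all $r$ rows. Your graph-theoretic argument is more transparent and more rigorous as written. It is also worth noting that the paper does, in fact, introduce essentially your bipartite graph later on: in the proof of Proposition~\ref{prop: circuit-free means lower bound} the authors define a graph $\Gamma(w)$ on vertex set $B(w)\cup C(w)$ with an edge for each nonempty intersection $B_i\cap C_j$, and they use the tree characterization (connected with $|V|-1$ edges) to pin down exactly when equality holds in the lower bound. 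So your approach not only proves the proposition but also anticipates the structure the paper exploits to characterize the equality case.
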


\begin{proof}
Without loss of generality, assume that $c \ge r$. Then, by the first requirement, the array $T$ must have at least $c$ nonempty cells.

If $r > 1$, then the remaining requirements for $T$ imply that there must be a column of $T$ with multiple nonempty cells. Let $x$ denote the number of columns in the array with multiple nonempty cells. In order to be able to reach nonempty cells in all $r$ rows, these $x$ columns must contain at least $r+x-1$ nonempty cells.

Because we initially accounted for one nonempty cell in each column, we now have that $T$ must contain at least
$$c + (r+x-1) - x = c + r - 1$$
nonempty cells.
\end{proof}

\begin{example}

Proposition~\ref{prop:nonempty cells in tables with jump property} is demonstrated in Figure~\ref{fig:table with jump property}, which gives a qualifying $7\times 9$ array with $15$ nonempty cells. 
\begin{figure}[htbp]
$$\begin{tikzpicture}[scale=.75]
\foreach \x in {0,1,2,3,4,5,6,7,8,9} {\foreach \y in {0,1,2,3,4,5,6,7} {\draw (\x,0) -- (\x,7); \draw (0,\y) -- (9,\y);};}
\foreach \x in {(.5,3.5),(.5,5.5),(1.5,.5),(2.5,1.5),(2.5,2.5),(2.5,3.5),(3.5,1.5),(3.5,4.5),(4.5,3.5),(5.5,2.5),(6.5,4.5),(6.5,6.5),(7.5,.5),(7.5,2.5),(8.5,.5)} {\fill \x circle (3pt);}
\end{tikzpicture}$$
\caption{A $7 \times 9$ array satisfying the requirements of Proposition~\ref{prop:nonempty cells in tables with jump property} and containing $15 = 7+9-1$ nonempty cells.}\label{fig:table with jump property}
\end{figure}
Note that a  violation of at least one of the three requirements of Proposition~\ref{prop:nonempty cells in tables with jump property} would occur if any of the nonempty cells in this array were to be made empty. For example, if the cell in the fifth row and eighth column were to be made empty, then there would be no way to get from the bottom right nonempty cell to any nonempty cell outside of its row by a legal sequence of in-row and in-column jumps.
\end{example}

Having organized the elements of $R(w)$ into a chart $\mathcal{T}(w)$ satisfying the requirements of Proposition~\ref{prop:nonempty cells in tables with jump property}, we immediately obtain the following bounds.

\begin{theorem}\label{thm:bounding |R|}
For any permutation $w$,
$$|B(w)| + |C(w)| - 1 \le |R(w)| \le |B(w)| \cdot |C(w)|.$$
\end{theorem}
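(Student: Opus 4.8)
The statement follows immediately by applying Proposition~\ref{prop:nonempty cells in tables with jump property} to the table $\mathcal{T}(w)$, together with the trivial upper bound on the number of cells. The main work is simply to verify that $\mathcal{T}(w)$ meets the three hypotheses of that proposition, so that the lower bound $|B(w)| + |C(w)| - 1$ applies; the upper bound is essentially free.

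\textbf{Lower bound.} First I would set $r = |B(w)|$ (the number of rows, indexed by the braid classes) and $c = |C(w)|$ (the number of columns, indexed by the commutation classes), and check each hypothesis of Proposition~\ref{prop:nonempty cells in tables with jump property} in turn. Each row $B \in B(w)$ is a nonempty braid class, so it contributes at least one reduced word $\mathbf{u}$, which lands in some column; hence every row has a nonempty cell. Symmetrically, every commutation class $C \in C(w)$ is nonempty, so every column has a nonempty cell. The third, connectivity hypothesis is where the real content lies: I would invoke Theorem~\ref{thm:connected graph}, which asserts that $G(w)$ is connected. Two reduced words lying in the same row of $\mathcal{T}(w)$ are in the same braid class, and two in the same column are in the same commutation class; an in-row jump between nonempty cells corresponds to a path of braid edges in $G(w)$, while an in-column jump corresponds to a path of commutation edges. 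Since $G(w)$ is connected, any two reduced words are joined by a path alternating c-edges and b-edges, which translates directly into a sequence of in-row and in-column jumps through nonempty cells. Thus all three hypotheses hold, and the proposition yields at least $c + r - 1 = |C(w)| + |B(w)| - 1$ nonempty cells. By Corollary~\ref{cor:cells have at most one element}, each nonempty cell contributes exactly one reduced word, so $|R(w)| \ge |B(w)| + |C(w)| - 1$.

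\textbf{Upper bound.} The table $\mathcal{T}(w)$ has exactly $|B(w)| \cdot |C(w)|$ cells in total, and by Corollary~\ref{cor:cells have at most one element} each holds at most one element of $R(w)$; since every reduced word occupies exactly one cell, $|R(w)| \le |B(w)| \cdot |C(w)|$.

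\textbf{Main obstacle.} The only step requiring care is the verification of the connectivity hypothesis, namely translating the graph-theoretic connectedness of $G(w)$ into the jump-connectivity condition on nonempty cells of $\mathcal{T}(w)$. One must observe that a single Coxeter move (either a commutation or a braid move) takes a reduced word to another reduced word in the \emph{same} row or \emph{same} column of $\mathcal{T}(w)$ — a commutation move preserves the braid class and hence stays in the same row (moving within or between columns), and a braid move preserves the commutation class and hence stays in the same column. Consequently each edge of the connected graph $G(w)$ is exactly an in-row or in-column jump between nonempty cells, so connectedness of $G(w)$ gives precisely the required jump-connectivity. Everything else is bookkeeping already supplied by Proposition~\ref{prop:nonempty cells in tables with jump property} and Corollary~\ref{cor:cells have at most one element}.
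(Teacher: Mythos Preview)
Your approach is essentially identical to the paper's: verify the three hypotheses of Proposition~\ref{prop:nonempty cells in tables with jump property} for $\mathcal{T}(w)$ using Theorem~\ref{thm:connected graph} for connectivity, invoke Corollary~\ref{cor:cells have at most one element} to equate nonempty cells with $|R(w)|$, and read off the trivial upper bound from the total number of cells. One slip to fix in your final paragraph: you have the labels reversed --- a commutation move preserves the \emph{commutation} class and is therefore an in-\emph{column} jump, while a braid move preserves the \emph{braid} class and is an in-\emph{row} jump; your earlier paragraph had this right, so the argument itself is unaffected.
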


\begin{proof}
The upper bound in this result is an immediate consequence of the
definition of $\mathcal{T}(w)$ and of Remark~\ref{rem:cells have at
  most one element}. The lower bound follows from
Proposition~\ref{prop:nonempty cells in tables with jump property}. We
note that by the definition of $\mathcal{T}(w)$ and
by Theorem~\ref{thm:connected graph}, the conditions in
Proposition~\ref{prop:nonempty cells in tables with jump property} are
satisfied.
\end{proof}

It is easy to see that these bounds are sharp, because they are simultaneously achieved by any fully commutative $w$. More precisely, if $w$ is fully commutative, then $|C(w)|=1$ and $|B(w)|=|R(w)|$ by Proposition~\ref{prop:braids and commutations are different}, and both bounds of Theorem~\ref{thm:bounding |R|} are achieved. The bounds are similarly realized by any $w$ for which $|B(w)|=1$. 

We devote the next two subsections to characterizing when the upper and lower bounds of Theorem~\ref{thm:bounding |R|} are achieved.

\subsection{Characterizing the upper bound}

We first turn our attention to the upper bound of Theorem~\ref{thm:bounding |R|}. As discussed at the end of the last subsection, fully commutative permutations (for which $|C(w)| = 1$), achieve this bound. The next lemma shows that if either $|C(w)|$ or $|B(w)|$ is equal to $1$, then the other cardinality must be equal to $|R(w)|$.

\begin{corollary}\label{cor:one class of size 1}
Fix a permutation $w$ and suppose that $|C(w)| = 1$ (respectively, $|B(w)| = 1$). Then $|B(w)| = |R(w)|$ (respectively, $|C(w)| =  |R(w)|$).
\end{corollary}

\begin{proof}
This follows directly from Theorem~\ref{thm:bounding |R|}. One could also prove the statement using Proposition~\ref{prop:braids and commutations are different}.
\end{proof}

Therefore, by Corollary~\ref{cor:one class of size 1}, if either $|C(w)|$ or $|B(w)|$ is equal to $1$, then the upper bound of Theorem~\ref{thm:bounding |R|} is automatically achieved. In fact, as we prove below, these are the only permutations that will achieve that upper bound.

\begin{proposition}\label{prop:characterizing the upper bound}
$|R(w)| = |B(w)| \cdot |C(w)|$ if and only if $|B(w)| = 1$ or $|C(w)| = 1$.
\end{proposition}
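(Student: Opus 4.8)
The plan is to prove the contrapositive of the forward direction: if both $|B(w)| \geq 2$ and $|C(w)| \geq 2$, then the table $\mathcal{T}(w)$ must contain at least one empty cell, so that $|R(w)| < |B(w)| \cdot |C(w)|$. (The reverse direction is immediate from Lemma~\ref{lem:one class of size 1}, which already shows that $|B(w)|=1$ or $|C(w)|=1$ forces equality.) The upper bound $|R(w)| \leq |B(w)| \cdot |C(w)|$ from Theorem~\ref{thm:bounding |R|} is attained exactly when every cell of $\mathcal{T}(w)$ is nonempty, by Corollary~\ref{cor:cells have at most one element}. So the goal reduces to showing that a completely filled table is incompatible with having at least two braid classes \emph{and} at least two commutation classes.

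First I would invoke Theorem~\ref{thm:bipartite graphs}: the graph $G_b(w)$ on braid classes (whose edges record commutation moves between classes) is bipartite, and likewise $G_c(w)$ on commutation classes is bipartite. The intended contradiction is that a fully populated table forces a structure in $G_c(w)$ (or $G_b(w)$) that cannot be bipartite unless one of the partitions is trivial. Concretely, suppose every cell is filled. Then for each braid class $B$ (each row) and each pair of distinct commutation classes $C, C'$ (two columns), the cells $(B,C)$ and $(B,C')$ both contain reduced words of the \emph{same} braid class. Two words in the same braid class but different commutation classes are connected in $G(w)$ by a path of braid moves; conversely, within a fixed commutation class the words in a given column relate across braid classes. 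The key observation to extract is that a filled table means every commutation class is adjacent (via a braid move) to every braid class it meets, and this global adjacency pattern propagates a cycle structure. I would aim to show that if $|B(w)|\geq 2$ and $|C(w)|\geq 2$, then a fully filled table produces an odd cycle — most cleanly a triangle or a $4$-cycle that forces two classes of the same color to be adjacent — in either $G_c(w)$ or $G_b(w)$, contradicting bipartiteness.

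The main obstacle, and the step I expect to require the most care, is converting the combinatorial statement ``every cell is nonempty'' into an actual odd closed walk in one of the bipartite graphs. The difficulty is that adjacency in $G_c(w)$ and $G_b(w)$ is defined by the \emph{existence} of a single Coxeter move between representatives, whereas a filled table only guarantees the existence of representatives in each cell, not that specific pairs differ by a single move. I would handle this by exploiting Proposition~\ref{prop:braids and commutations are different} (each cell has at most one word) together with the connectivity of $G(w)$ from Theorem~\ref{thm:connected graph}: tracking how a single commutation or braid move changes the $(B,C)$-coordinates of a word lets one read off edges of $G_b(w)$ and $G_c(w)$ directly. A commutation move fixes the braid class and changes only the commutation class, hence gives a horizontal step recording a $G_c$-edge; a braid move fixes the commutation class and changes only the braid class, giving a vertical step recording a $G_b$-edge. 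A filled $2\times 2$ subtable then yields a closed $4$-walk alternating between $G_c$-edges and $G_b$-edges, and I would argue this closed walk forces an odd cycle in one of the two graphs — contradicting Theorem~\ref{thm:bipartite graphs} and thereby establishing that no cell can be guaranteed filled once both class counts exceed one.
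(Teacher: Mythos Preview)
Your proposal has a genuine gap in the forward direction. First, a small slip: a commutation move keeps a word in the \emph{same} commutation class (and may change its braid class), not the other way around; braid moves preserve braid class. More importantly, the bipartiteness argument does not yield the contradiction you want. Suppose the $2\times 2$ subtable on rows $B_1,B_2$ and columns $C_1,C_2$ is filled. The words in row $B_1$ are connected by a path of braid moves, giving a walk in $G_c(w)$ from $C_1$ to $C_2$; the words in row $B_2$ give another such walk. But $G_c(w)$ is bipartite, so \emph{any} two walks from $C_1$ to $C_2$ have the same length parity --- concatenating them gives an even closed walk, not an odd one. The same holds in $G_b(w)$. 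A two-vertex graph with an edge is bipartite, so Theorem~\ref{thm:bipartite graphs} is perfectly compatible with a fully filled $2\times 2$ block, and your proposed contradiction never materializes. You correctly identified the obstacle (cell occupancy does not give single-move adjacency), but the workaround you sketch does not overcome it.

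The paper's argument is of a different nature: it produces an explicit empty cell. Since $|C(w)|>1$, some reduced word contains a factor $i(i+1)i$; choose $\mathbf{u}$ with such a factor as far left as possible. Since $|B(w)|>1$, there is an additional letter, and (after symmetry) one may write $\mathbf{u}=\mathbf{d}\,j\,i(i+1)i\,\mathbf{e}$ with $j>i+1$. Applying the braid move to the factor gives $\mathbf{v}\in C'$, and commuting $j$ past $i$ gives $\mathbf{t}\in B'$. One then shows $B'\cap C'=\emptyset$ by restricting attention to the subword on the letters $\{i,i+1\}$: this subword is unchanged by commutation moves, and by the leftmost choice of the braid factor it cannot be altered to match $\mathbf{v}$'s $\{i,i+1\}$-subword by braid moves applied to $\mathbf{t}$. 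That explicit invariant is what your approach is missing.
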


\begin{proof}
If $|B(w)| = 1$ or $|C(w)| = 1$, then Corollary~\ref{cor:one class of size 1} immediately implies $|R(w)| = |B(w)| \cdot |C(w)|$. This proves one direction of the proposition.

We prove the other direction of the proposition by contrapositive.  Suppose that $|C(w)|>1$ and $|B(w)|>1$. Since $w$ has more than one commutation class, this $w$ is not fully commutative, meaning that there is some reduced word for $w$ that contains a factor of the form $i(i+1)i$. Choose such a reduced word, $\mathbf{u}$, so that the factor $i(i+1)i$ is as far to the left in $\mathbf{u}$ as possible. Since $w$ has more than one braid class, $\mathbf{u}$ must contain at least one additional letter. 
In particular, we have, without loss of generality, that
\begin{equation}\label{eqn:u supporting braid and commutation}
\mathbf{u} = \mathbf{d}ji(i+1)i\mathbf{e},
\end{equation}
with $j>i+1$ and where $\mathbf{d}$ and $\mathbf{e}$ are possibly empty subwords.

Let $B \in B(w)$ and $C \in C(w)$ be the braid and commutation classes, respectively, containing $\mathbf{u} \in R(w)$. From Equation~\eqref{eqn:u supporting braid and commutation}, we see that
$$\mathbf{v}=\b_k\mathbf{u}=\mathbf{d}j(i+1)i(i+1)\mathbf{e}$$
is also an element of $B$, where $k$ is the position of the indicated $i+1$ in $\mathbf{u}$. Additionally,
$$\mathbf{t}=\c_{k-2}\mathbf{u}=\mathbf{d}ij(i+1)i\mathbf{e}$$
is also an element of $C$.

By Proposition~\ref{prop:braids and commutations are different}, in order to satisfy $|R(w)|=|B(w)|\cdot |C(w)|$, the intersection of each braid class with each commutation class must be exactly one reduced word. Let $\mathbf{v} \in C' \in C(w)$ and $\mathbf{t} \in B' \in B(w)$. Thus there must be exactly one reduced word $\mathbf{a} \in B' \cap C'$, in the same commutation class as $\mathbf{v}$ and the same braid class as $\mathbf{t}$. In other words, \[\mathbf{a}=\c_{i_1}\cdots \c_{i_m}\b_k\mathbf{u}=\b_{j_1}\cdots \b_{j_p}\c_{k-2}\mathbf{u},\] for some $m$ and $p$.

As in the proof of Proposition~\ref{prop:braids and commutations are different}, consider the subword of $\mathbf{a}$ obtained by restricting to the entries $i$ and $i+1$. Because $\mathbf{a}$ and $\mathbf{v}$ differ by commutation moves, this subword for $\mathbf{a}$ must be equal to the analogous subword for $\mathbf{v}$. By choice of $\mathbf{u}$, no factor of the form $i(i+1)i$ can be produced in the prefix $\mathbf{d}ij(i+1)$ of $\mathbf{t}$ by Coxeter moves. Therefore any braid moves applied to $\mathbf{t}$ will not change the prefix of the $\{i, i+1\}$ subword formed by using only the letters $i$ and $i+1$, meaning that this subword can never be identical to the corresponding subword for $\mathbf{v}$. As the word $\mathbf{a}$ is obtained from $\mathbf{t}$ using only braid moves, the prefix of the $\{i,i+1\}$ subword of $\mathbf{a}$ cannot be the same as that of $\mathbf{v}$. Therefore there is no such $\mathbf{a}$, and $B' \cap C' = \emptyset$. Hence $|R(w)| \neq |B(w)|\cdot |C(w)|$, concluding the proof.
\end{proof}

The narrowness of Proposition~\ref{prop:characterizing the upper bound} actually implies that those permutations achieving the upper bound of Theorem~\ref{thm:bounding |R|} are also among those that achieve the lower bound of Theorem~\ref{thm:bounding |R|}.

\begin{corollary}\label{cor:upper bound achievers are also lower bound achievers}
If $|B(w)| = 1$ or $|C(w)| = 1$, then
$$|B(w)| + |C(w)| - 1 = |R(w)| = |B(w)| \cdot |C(w)|.$$
\end{corollary}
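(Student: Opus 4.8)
The plan is to prove this by direct substitution, treating the two hypotheses symmetrically and reducing each of the two bounds in Theorem~\ref{thm:bounding |R|} to the single quantity $|R(w)|$. By the symmetry of the statement under interchanging the roles of $B$ and $C$, it suffices to handle the case $|C(w)| = 1$; the case $|B(w)| = 1$ then follows verbatim with the two letters swapped throughout.

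First I would invoke Lemma~\ref{lem:one class of size 1}, which tells us that the assumption $|C(w)| = 1$ forces $|B(w)| = |R(w)|$. With this identity in hand, every expression in the displayed chain can be rewritten purely in terms of $|B(w)|$. Substituting $|C(w)| = 1$ into the lower bound gives $|B(w)| + |C(w)| - 1 = |B(w)| + 1 - 1 = |B(w)|$, and substituting into the upper bound gives $|B(w)| \cdot |C(w)| = |B(w)| \cdot 1 = |B(w)|$. Since $|B(w)| = |R(w)|$ by the lemma, both the sum and the product coincide with $|R(w)|$, which establishes the two desired equalities at once.

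There is no genuine obstacle to overcome here: the corollary is an immediate arithmetic consequence of Lemma~\ref{lem:one class of size 1}, resting on the elementary observation that setting either cardinality equal to $1$ collapses both the sum $|B(w)| + |C(w)| - 1$ and the product $|B(w)| \cdot |C(w)|$ to the same value. The only point worth highlighting is that this formalizes the remark made in the discussion immediately following Theorem~\ref{thm:bounding |R|}, namely that fully commutative permutations (and those with a single braid class) simultaneously realize both bounds. Combined with Proposition~\ref{prop:characterizing the upper bound}, which identifies these as exactly the permutations attaining the upper bound, the corollary records that the phenomenon of the two bounds coinciding occurs precisely in this degenerate regime.
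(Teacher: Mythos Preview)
Your proof is correct and matches the paper's reasoning: the paper states this corollary without an explicit proof block, treating it as an immediate consequence of Lemma~\ref{lem:one class of size 1} (whose own proof already substitutes $|C(w)|=1$ into both bounds of Theorem~\ref{thm:bounding |R|}), and your argument spells out exactly that substitution.
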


Note that almost all of the permutations described in Proposition~\ref{prop:one braid class} are fully commutative. In fact, the only elements with a single braid class that are not also fully commutative are those for which $R(w) = \{i(i+1)i,(i+1)i(i+1)\}$ for some $i$. Thus we can reformulate Proposition~\ref{prop:characterizing the upper bound} as follows.

\begin{corollary}\label{cor:characterizing upper bound}
$|R(w)| = |B(w)| \cdot |C(w)|$ if and only if
\begin{itemize}
\item $w$ is fully commutative, or
\item $R(w) = \{i(i+1)i,(i+1)i(i+1)\}$ for some $i$; equivalently, $w(i) = i+2$ and $w(i+2) = i$ for some $i$, and all other values are fixed by $w$.
\end{itemize}
\end{corollary}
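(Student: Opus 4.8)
The plan is to derive this corollary entirely from Proposition~\ref{prop:characterizing the upper bound}, which already establishes that $|R(w)| = |B(w)| \cdot |C(w)|$ holds if and only if $|B(w)| = 1$ or $|C(w)| = 1$. The remaining task is purely one of translation: I would rewrite each of these two single-class conditions in the explicit combinatorial language of the two bullet points, using the classifications recorded in Propositions~\ref{prop:one commutation class} and~\ref{prop:one braid class}.

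First, the condition $|C(w)| = 1$ is, by Proposition~\ref{prop:one commutation class}, exactly the statement that $w$ is fully commutative, which accounts for the first bullet. Next, I would unpack the condition $|B(w)| = 1$ via Proposition~\ref{prop:one braid class}, which says that such a $w$ has a reduced word either of the form $i(i+\varepsilon)(i+2\varepsilon)\cdots(i+k\varepsilon)$ or of the form $i(i+1)i$, and I would treat these two forms separately. In the first case every letter is distinct and consecutive letters differ by exactly one, so the word supports neither a braid move (there is no repeated pattern $u_{i-1}=u_{i+1}$) nor a commutation move; hence $|R(w)|=1$, which forces $|C(w)|=1$ and so $w$ is already fully commutative. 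Thus every permutation of this first type is subsumed by the first bullet. In the second case $R(w) = \{i(i+1)i,\ (i+1)i(i+1)\}$: the two words are related by a single braid move but support no commutation move, so $|C(w)| = 2$ and $w$ is \emph{not} fully commutative. This is the genuinely new family producing the second bullet.

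Finally, I would verify the ``equivalently'' clause by a short computation in one-line notation. Since $s_i$ swaps $i$ and $i+1$, one checks that $s_i s_{i+1} s_i$ is the transposition exchanging $i$ and $i+2$, so $w(i) = i+2$, $w(i+2) = i$, and every other value (including $w(i+1) = i+1$) is fixed; conversely any such $w$ has precisely the reduced words $i(i+1)i$ and $(i+1)i(i+1)$. Taking the union of the fully commutative permutations with this exceptional family then matches ``$|B(w)| = 1$ or $|C(w)| = 1$'' exactly, which completes the reformulation. The only delicate point is the overlap between the two single-class conditions: I must confirm that the permutations arising from the first reduced-word form in Proposition~\ref{prop:one braid class} are genuinely fully commutative, so that the $|B(w)|=1$ case contributes only the one exceptional family beyond the fully commutative permutations and nothing is lost or double-counted. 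That confirmation is exactly the $|R(w)|=1$ observation above, and beyond this bookkeeping the argument presents no real obstacle.
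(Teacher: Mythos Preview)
Your proposal is correct and follows essentially the same approach as the paper: both start from Proposition~\ref{prop:characterizing the upper bound}, identify $|C(w)|=1$ with full commutativity via Proposition~\ref{prop:one commutation class}, and then observe via Proposition~\ref{prop:one braid class} that the monotonic $|B(w)|=1$ permutations are already fully commutative, leaving only the $R(w)=\{i(i+1)i,(i+1)i(i+1)\}$ family as a genuinely new case. Your write-up is in fact more detailed than the paper's, which records only the key observation in a sentence before stating the corollary without a formal proof.
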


The characterization in the previous corollary enables us to enumerate the permutations in $S_n$ achieving the upper bound of Theorem~\ref{thm:bounding |R|}, for any $n$. Note that this is sequence A290953 in \cite{oeis}.

\begin{corollary}\label{cor:upper bound enumeration}
The number of permutations in $S_n$ for which $|R(w)| = |B(w)| \cdot |C(w)|$ is equal to
$$
\begin{cases}
1 & \text{if } n = 1, \text{ and}\\
C_n + n-2 & \text{if } n > 1,
\end{cases}
$$
where $C_n = \binom{2n}{n}/(n+1)$ is the $n$th Catalan number.
\end{corollary}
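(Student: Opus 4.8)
The plan is to read off the enumeration directly from Corollary~\ref{cor:characterizing upper bound}, which identifies the permutations achieving $|R(w)| = |B(w)| \cdot |C(w)|$ as precisely the union of two families: the fully commutative permutations, and those $w$ with $R(w) = \{i(i+1)i,(i+1)i(i+1)\}$, equivalently $w(i) = i+2$ and $w(i+2) = i$ with all other values fixed. First I would show these two families are disjoint, then count each separately, and finally add the totals, handling the small case $n = 1$ by hand to match the stated formula.

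For the first family, I would invoke Proposition~\ref{prop:one commutation class}, which equates full commutativity with $321$-avoidance, together with the classical enumeration of $321$-avoiding permutations in $S_n$ by the Catalan number $C_n = \binom{2n}{n}/(n+1)$. This accounts for the $C_n$ term. For the second family, each such $w$ is determined entirely by the choice of $i$, and the requirement that both $i$ and $i+2$ lie in $[n]$ forces $1 \le i \le n-2$. Hence there are exactly $n-2$ such permutations, which is the interpretation of the summand $n-2$ (it vanishes at $n = 2$, as it should, since no such $i$ exists).

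To establish disjointness, I would observe that any $w$ in the second family satisfies $w(i) = i+2 > i+1 = w(i+1) > i = w(i+2)$, so the indices $i < i+1 < i+2$ witness a $321$ pattern; by Proposition~\ref{prop:one commutation class} such $w$ is not fully commutative, so the two families do not overlap. Adding the counts gives $C_n + (n-2)$ whenever $n \ge 2$. The only genuine subtlety, and the step requiring the most care, is reconciling the edge cases with the piecewise formula: for $n = 1$ the second family is empty and the unique permutation is the (fully commutative) identity, so the count is $C_1 = 1$; the expression $C_n + (n-2)$ is quarantined to $n > 1$ precisely because $n - 2$ would otherwise be negative. Since Corollary~\ref{cor:characterizing upper bound} and the Catalan enumeration do all the structural work, what remains is essentially bookkeeping: verifying the range of $i$, confirming disjointness via the $321$ pattern, and checking the small values $n \in \{1,2\}$ against the formula.
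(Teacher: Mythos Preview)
Your proposal is correct and follows essentially the same approach as the paper: invoke Corollary~\ref{cor:characterizing upper bound}, note the two families are disjoint (the paper states this while you spell out the $321$ pattern explicitly), count the fully commutative permutations by $C_n$ via Proposition~\ref{prop:one commutation class}, count the second family by the $n-2$ choices of $i$, and handle $n=1$ separately.
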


\begin{proof}
For $n =1$, one of the categories given in Corollary~\ref{cor:characterizing upper bound} is not feasible, and the enumeration is quick to do by hand.

Now assume that $n > 1$. First observe that the two possibilities of Corollary~\ref{cor:characterizing upper bound} are mutually exclusive. Proposition~\ref{prop:one commutation class} says that fully commutative permutations are exactly those that avoid the pattern $321$, and it is well-known that the number of $321$-avoiding permutations in $S_n$ is the $n$th Catalan number $C_n$; see \cite{billey-jockusch-stanley}, for example. It remains to count the permutations $w \in S_n$ for which $R(w) = \{i(i+1)i,(i+1)i(i+1)\}$, and these can be described by choosing the letter $i \in [1,n-2]$.
\end{proof}

\subsection{Characterizing the lower bound}

Characterizing which permutations achieve the lower bound of Theorem~\ref{thm:bounding |R|} has a noticeably different flavor from that of the upper bound characterization. Instead of two classes of permutations, one of which is broad and one of which is quite specific, the lower bound achievers fall into more classes, all of which are highly specified. 

\begin{definition}\label{defn:circuit}
The collection of reduced words $R(w)$ has a \emph{circuit} if there exists a sequence of elements in $R(w)$, say
$$\mathbf{u_0}, \mathbf{u_1}, \mathbf{u_2}, \ldots, \mathbf{u_{2t}} = \mathbf{u_0}$$
for some integer $t$ such that for all $s$, either
$$\begin{cases}
&\mathbf{u_{2s}}, \mathbf{u_{2s+1}} \in B_s \text{ and}\\
&\mathbf{u_{2s+1}}, \mathbf{u_{2s+2}} \in C_s
\end{cases}$$
or
$$\begin{cases}
&\mathbf{u_{2s}}, \mathbf{u_{2s+1}} \in C_s \text{ and}\\
&\mathbf{u_{2s+1}}, \mathbf{u_{2s+2}} \in B_s
\end{cases}$$
where $B_s \in B(w)$ and $C_s \in C(w)$. 
Equivalently, define a graph $\Gamma(w)$ whose vertices are $B(w) \cup C(w)$, with an edge between $B_i$ and $C_j$ if and only if $B_i \cap C_j \neq \emptyset$; then $R(w)$ has a circuit if the graph $\Gamma(w)$ contains a circuit, i.e., if there is some vertex $v$ in $\Gamma(w)$ such that one can travel from $v$ to itself following a nontrivial sequence of edges and vertices. Equivalently, $R(w)$ has a circuit if and only if $\Gamma(w)$ is not a tree.
\end{definition}

\begin{definition}
A permutation $w$ is \emph{circuit-free} if its collection $R(w)$ of reduced words does not have a circuit.
\end{definition}

Avoidance of circuits in the set of reduced words completely characterizes those permutations that achieve the lower bound of Theorem~\ref{thm:bounding |R|}, because having such a circuit would indicate that there are more reduced words than are minimally required.

\begin{proposition}\label{prop: circuit-free means lower bound}
A permutation $w$ is circuit-free if and only if 
\[|R(w)| = |B(w)| + |C(w)| - 1.\]
\end{proposition}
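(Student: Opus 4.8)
The plan is to recognize this as a statement about the graph $\Gamma(w)$ and exploit the bijection between nonempty cells of $\mathcal{T}(w)$ and edges of $\Gamma(w)$. First I would observe that, by Corollary~\ref{cor:cells have at most one element} and the definition of $\mathcal{T}(w)$, the nonempty cells of $\mathcal{T}(w)$ are in bijection with the reduced words $R(w)$, and a nonempty cell in row $B$ and column $C$ corresponds precisely to the condition $B \cap C \neq \emptyset$, which is exactly an edge between vertex $B$ and vertex $C$ in $\Gamma(w)$. Thus $|R(w)|$ equals the number of edges of $\Gamma(w)$, while $|B(w)| + |C(w)|$ equals the number of vertices of $\Gamma(w)$.

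With this dictionary in hand, the proposition reduces to a standard fact from graph theory: a connected graph is a tree if and only if its number of edges is one less than its number of vertices. The plan is therefore to verify the two hypotheses needed to invoke this fact. The key observation is that $\Gamma(w)$ is \emph{connected}: this follows from Theorem~\ref{thm:connected graph}, since connectivity of $G(w)$ means one can travel between any two reduced words by Coxeter moves, and each such move either stays within a braid class or within a commutation class, hence corresponds to moving between adjacent vertices of $\Gamma(w)$. (This is the same translation already used in the proof of Theorem~\ref{thm:bounding |R|}.) Given connectivity, I would then recall the definition of circuit-free: by Definition~\ref{defn:circuit}, $w$ is circuit-free precisely when $\Gamma(w)$ contains no circuit, i.e., when $\Gamma(w)$ is a tree.

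Assembling these pieces, the argument runs: $w$ is circuit-free $\iff$ $\Gamma(w)$ is a tree $\iff$ (since $\Gamma(w)$ is connected) the number of edges equals the number of vertices minus one $\iff$ $|R(w)| = (|B(w)| + |C(w)|) - 1$. I would write this as a short chain of equivalences, citing the tree characterization and the edge/vertex counts established above.

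I do not expect a serious obstacle here, since the proposition is essentially a repackaging of the $|E| = |V| - 1$ tree criterion once the graph-theoretic translation is set up. The one point requiring genuine care is establishing connectivity of $\Gamma(w)$ cleanly from Theorem~\ref{thm:connected graph}; one must check that a single Coxeter move never simultaneously crosses a braid-class boundary \emph{and} a commutation-class boundary (which is immediate, since a braid move fixes the commutation class and a commutation move fixes the braid class), so that every edge of $G(w)$ projects to either a loop or a single edge of $\Gamma(w)$, and connectivity descends. The remaining subtlety is purely bookkeeping: confirming that distinct reduced words in the same cell cannot occur (handled by Corollary~\ref{cor:cells have at most one element}), so that edges of $\Gamma(w)$ are counted without multiplicity and the count is exactly $|R(w)|$.
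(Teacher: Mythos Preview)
Your proposal is correct and follows essentially the same route as the paper's own proof: both translate the problem into the graph $\Gamma(w)$, use Theorem~\ref{thm:connected graph} to get connectivity, invoke the $|E|=|V|-1$ characterization of trees, and identify $|E|=|R(w)|$ via Proposition~\ref{prop:braids and commutations are different} (equivalently, Corollary~\ref{cor:cells have at most one element}). If anything, your treatment of why connectivity of $G(w)$ descends to $\Gamma(w)$ is more explicit than the paper's, which simply asserts it.
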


The main idea in the proof of Proposition~\ref{prop: circuit-free means lower bound} is that $R(w)$ has a circuit if and only if removing a reduced word from a chart $\mathcal{T}(w)$ leaves a chart that still satisfies the requirements of Proposition~\ref{prop:nonempty cells in tables with jump property}. In other words, $R(w)$ contains a circuit if and only if one can detour around a given reduced word in a chart $\mathcal{T}(w)$ without violating the rules of Proposition~\ref{prop:nonempty cells in tables with jump property}. 

\begin{proof}[Proof of Proposition~\ref{prop: circuit-free means lower bound}]
Recall that $\Gamma(w)$ is the graph whose vertices are $B(w) \cup C(w)$, with an edge between $B_i$ and $C_j$ if and only if $B_i \cap C_j \neq \emptyset$. By Theorem~\ref{thm:connected graph}, this graph is connected. Recall also that because $R(w)$ has a circuit if and only if the graph $\Gamma(w)$ has a circuit, the permutation $w$ is circuit-free if and only if $\Gamma(w)$ is a tree. 

Recall that a finite connected graph is a tree if and only if it has one fewer edge than vertices.  Due to Proposition~\ref{prop:braids and commutations are different}, each edge in this graph corresponds to exactly one reduced word for $w$, and each reduced word for $w$ corresponds to exactly one edge in this graph. Therefore the number of edges is $|R(w)|$.   The number of vertices in the graph $\Gamma(w)$ is $|B(w)| + |C(w)|$ by definition.

Thus we have that $w$ is circuit-free if and only if $\Gamma(w)$ is a tree, and this graph is a tree if and only if
$$|R(w)| = |B(w)| + |C(w)| -1, $$
which means precisely that $w$ achieves the lower bound of Theorem~\ref{thm:bounding |R|}.
\end{proof}

We can now exploit the idea of circuit-free permutations to explicitly describe the permutations achieving the lower bound of Theorem~\ref{thm:bounding |R|}.

\begin{proposition}\label{prop:characterizing lower bound achievers}
A permutation $w$ is circuit-free (that is, it achieves the lower bound of Theorem~\ref{thm:bounding |R|}) if and only if 
\begin{itemize}\setlength{\itemsep}{.1in}
\item $|B(w)| = 1$, or
\item $|C(w)| = 1$, or
\item $\mathbf{u}i(i+1)i\mathbf{v} \in R(w)$ such that, up to symmetry, one of the following conditions holds:
\begin{itemize}\setlength{\itemsep}{.1in}\renewcommand{\labelitemii}{$\star$}
\vspace{.1in}
\item $\mathbf{u}=\emptyset$ and $\mathbf{v}\in\{(i-1)i,(i-1)(i-2)\cdots(i-1-t)\}$ for some $t \ge 0$, or
\item $\mathbf{u} = \mathbf{v} = i-1$, or
\item $\mathbf{u} = (i-1-t)\cdots (i-2)(i-1)$ and $\mathbf{v} = (i+2)(i+3)\cdots(i+2+t')$ for some $t,t' \ge 0$,
\end{itemize}
where the allowable symmetries are those that arise from the standard operations of reversal, complementation, and reverse complementation.
\end{itemize}
\end{proposition}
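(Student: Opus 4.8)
The plan is to build on Proposition~\ref{prop: circuit-free means lower bound}, which already identifies circuit-freeness with $\Gamma(w)$ being a tree; the task then becomes an explicit classification of those $w$ for which $\Gamma(w)$ is a tree. The cases $|B(w)| = 1$ and $|C(w)| = 1$ are automatically circuit-free by Corollary~\ref{cor:upper bound achievers are also lower bound achievers}, which accounts for the first two bullets, so I would fix $|B(w)| > 1$ and $|C(w)| > 1$ throughout the remainder and prove that, in this regime, circuit-freeness is equivalent to the existence of a reduced word of the third type.

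For the sufficiency direction (a word of the listed form forces circuit-freeness), I would treat each family in turn, using reversal, complementation, and reverse-complementation to reduce to the displayed representatives. For each representative I would enumerate $R(w)$ directly by tracking Coxeter moves outward from the braid factor $i(i+1)i$: the extra letter produced by the braid move can only slide along a consecutive run by successive commutations, so $R(w)$ grows in a controlled, path-like fashion as the runs lengthen with $t$ and $t'$. I would then read off $B(w)$ and $C(w)$ and verify that $|R(w)| = |B(w)| + |C(w)| - 1$, equivalently that $\Gamma(w)$ is a tree; in every family $\Gamma(w)$ turns out to be a caterpillar, with the (possibly size-three) braid class of the factor serving as a central vertex and the run letters contributing pendant vertices. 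I have checked the representatives $i(i+1)i(i-1)i$, $i(i+1)i(i-1)(i-2)$, and $(i-1)i(i+1)i(i+2)$, and in each case the counts agree.

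The substance of the argument is the necessity direction. The key lemma I would prove first is that \emph{if any reduced word supports a braid move $\b_a$ and a commutation move $\c_b$ acting on disjoint positions, then $w$ is not circuit-free}: since moves on disjoint positions commute, the four distinct words $\mathbf{u}, \b_a\mathbf{u}, \c_b\mathbf{u}, \b_a\c_b\mathbf{u} = \c_b\b_a\mathbf{u}$ occupy the four corners of a rectangle in $\mathcal{T}(w)$, because the two braid classes and two commutation classes involved are forced to be distinct by Proposition~\ref{prop:braids and commutations are different}; this rectangle is a $4$-cycle in $\Gamma(w)$. Granting this, I would fix a reduced word $\mathbf{u}$ containing a factor $i(i+1)i$ at positions $p,p+1,p+2$ (one exists since $|C(w)|>1$) and conclude that $\mathbf{u}$ can support a commutation move only at the two positions flanking that factor, whence every other pair of adjacent letters of $\mathbf{u}$ differs by exactly $1$; that is, the prefix and suffix of $\mathbf{u}$ are consecutive runs. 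The remaining work is to show these runs terminate in precisely the prescribed patterns, by analyzing how each allowed flank commutation and each additional braid factor inside a run propagates through $R(w)$, discarding any configuration that spawns a second independent move or a longer circuit, and treating the overlapping-braid configuration (which produces family~1a, the unique case with a braid class of size three) separately.

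The main obstacle is exactly this final step of the necessity direction. It is delicate because the condition ``no independent braid--commutation pair'' is only \emph{necessary} for circuit-freeness: a longer circuit could in principle arise even when every $4$-cycle has been excluded, so those must be ruled out as well, while simultaneously showing that the consecutive runs cannot be extended or decorated beyond the three listed templates. Managing the boundary between the braid factor and its flanking runs, keeping careful track of which reduced words are reachable under the allowed moves, and organizing the symmetry reduction so that no configuration is missed or double-counted is where the bulk of the effort will be required.
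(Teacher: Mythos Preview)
Your outline is essentially the paper's proof: it too disposes of $|B(w)|=1$ and $|C(w)|=1$ via Corollary~\ref{cor:upper bound achievers are also lower bound achievers}, uses your ``four-corner'' $4$-cycle (a braid move and a commutation move applied in either order) as the basic obstruction in the necessity direction, derives from it that the prefix and suffix around $i(i+1)i$ must be sequentially consecutive, and checks sufficiency by directly computing $|R(w)|$, $|B(w)|$, $|C(w)|$ for each surviving family.

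Two refinements you will need, which the paper supplies explicitly, address exactly the obstacle you flag. First, your key lemma requires the braid and commutation moves to act on \emph{disjoint} positions, so it does not by itself constrain the letter immediately abutting the braid factor; if that letter is some $x\notin\{i-1,i,i+1,i+2\}$ the flanking commutation overlaps the braid. The paper closes a $4$-cycle in $\Gamma(w)$ here by sliding $x$ across all three positions of the factor (a commutation-\emph{class} edge, not a single $\c_b$), forcing the abutting letter to be $i-1$ or $i+2$. Second, the longer circuit you anticipate does arise: to exclude prefixes of the form $(i+t)\cdots i(i-1)$ with $t\ge 1$ the paper exhibits an explicit $8$-move circuit, and this is what caps the ``wrong-direction'' runs at two letters. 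With these two pieces in hand, the remaining necessity argument is a finite case analysis on the lengths of $\mathbf{u}$ and $\mathbf{v}$, exactly as you propose.
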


\begin{proof}
First note that if $|B(w)|=1$ or $|C(w)|=1$, then $w$ is circuit-free by Corollary~\ref{cor:upper bound achievers are also lower bound achievers} and Proposition~\ref{prop: circuit-free means lower bound}.

Now assume that $w$ is circuit-free, and that $|B(w)| > 1$ and $|C(w)| > 1$. Then $w$ has a reduced word of the form
$$\mathbf{u}i(i+1)i\mathbf{v},$$
where $\mathbf{u}$ and $\mathbf{v}$ could each (but not both) be empty subwords.

If $\mathbf{u}$ supports a commutation move, say $\c _k \mathbf{u}=\mathbf{u'}$, then the four words
$$\mathbf{u}i(i+1)i\mathbf{v}\,, \ \mathbf{u}(i+1)i(i+1)\mathbf{v}\,, \ \mathbf{u'}(i+1)i(i+1)\mathbf{v}\,, \ \mathbf{u'}i(i+1)i\mathbf{v}\,$$
form a circuit in $R(w)$. Thus, since $w$ is circuit-free, then $\mathbf{u}$ (and, analogously, $\mathbf{v}$) cannot support any commutation moves, meaning that adjacent letters are consecutive.

If $\mathbf{u} = \mathbf{\overline{u}}x$, where $x$ is a letter that commutes with both $i$ and $i+1$ (that is, $x \not\in \{i-1,i,i+1,i+2\}$), then this would also produce a circuit, consisting of the four elements
$$\mathbf{\overline{u}}x i(i+1)i \mathbf{v}\,, \
\mathbf{\overline{u}} i(i+1)i x \mathbf{v}\,, \
\mathbf{\overline{u}} (i+1)i(i+1) x \mathbf{v}\,, \
\mathbf{\overline{u}} x (i+1)i(i+1) \mathbf{v}$$
in $R(w)$. Thus if $\mathbf{u}$ is nonempty, then its rightmost letter is either $i-1$ or $i+2$. Similarly, if $\mathbf{v}$ is nonempty, then its leftmost letter is also either $i-1$ or $i+2$.

To summarize these findings so far, if $\mathbf{u}$ (respectively, $\mathbf{v}$) is nonempty, then its letters must be sequentially consecutive with rightmost (resp., leftmost) letter equal to $i-1$ or $i+2$. Call this \textbf{Rule 1}.

If $\mathbf{u}$ is non-monotonic, then it supports a braid move, say $\b_k \mathbf{u} = \mathbf{u'}$. Then $\mathbf{u'} i(i+1)i \mathbf{v} \in R(w)$ and the nonempty word $\mathbf{u'}$ does not satisfy \textbf{Rule 1}: either it is not sequentially consecutive, or its rightmost letter is not equal to $i-1$ or $i+2$. Thus $\mathbf{u}$ and $\mathbf{v}$ must each be monotonic. Call this \textbf{Rule 2}.

If $\mathbf{u} = (i+t)\cdots i(i-1)$, then we claim that $\mathbf{u}$ must contain at most two letters. 
Suppose not; that is, suppose that $t \ge 1$, and let ${\bf \overline{u}} = \b_{t+2}{\bf u}$. Then $R(w)$ would have a circuit because
$$(\c_{t+3}\c_{t})(\b_{t+2}\b_{t+4})(\c_{t+2})(\b_{t+1}\b_{t+3})(\c_{t+4}\c_{t+1})(\b_{t+3}\b_{t+1})(\c_{t+2})(\b_{t+4}\b_{t+2}){\bf \overline{u}} = {\bf \overline{u}}.$$ 
Therefore, by this and symmetric arguments, we conclude that if the second rightmost (respectively, leftmost) letter of $\mathbf{u}$ (resp., $\mathbf{v}$) is $i$ or $i+1$, then $\mathbf{u}$ (resp., $\mathbf{v}$) has no additional letters. Call this \textbf{Rule 3}.

We now use \textbf{Rules 1--3} to show that a reduced word for $w$ must have one of the forms given in the statement of the proposition. For each of those, we check using Proposition~\ref{prop: circuit-free means lower bound} that the resulting permutation $w$ is indeed circuit-free, which will conclude the proof.  To this end, consider the following three cases, which are exhaustive, up to symmetry.
\begin{enumerate}
\item[(i)] $\mathbf{u}$ and $\mathbf{v}$ each have at most one letter. The possible reduced words for $w$ are as follows.
\begin{enumerate}\renewcommand{\labelenumi}{\arabic{enumi}.}
\item ${i(i+1)i}(i-1)$ and symmetric cases\\
$|R(w)| = 3$, $|B(w)| = 2$, $|C(w)| = 2$, and $3 = 2+2-1$.
\item $(i-1){i(i+1)i}(i-1)$ and symmetric cases\\
$|R(w)| = 6$, $|B(w)| = 4$, $|C(w)| = 3$, and $6 = 4+3-1$.
\item $(i+2){i(i+1)i}(i-1)$ and symmetric cases\\
$|R(w)| = 4$, $|B(w)| = 3$, $|C(w)| = 2$, and $4 = 3+2-1$.
\end{enumerate}

\item[(ii)] $\mathbf{u} = \emptyset$ and $\mathbf{v}$ has at least two letters. The possible reduced words for $w$ are as follows.
\begin{enumerate}\renewcommand{\labelenumi}{\arabic{enumi}.}\addtocounter{enumi}{3}
\item ${i(i+1)i}(i-1)i$ and symmetric cases\\
$|R(w)| = 5$, $|B(w)| = 3$, $|C(w)| = 3$, and $5 = 3+3-1$.
\item ${i(i+1)i}(i-1)\cdots(i-1-t)$ and symmetric cases\\
$|R(w)| = t+3$, $|B(w)| = t+2$, $|C(w)| = 2$, and $t+3 = t+2 + 2 - 1$.
\end{enumerate}
The case $\mathbf{v} = \emptyset$ and $\mathbf{u}$ has at least two letters follows by symmetry.

\item[(iii)] $\mathbf{v} \neq \emptyset$ and $\mathbf{u}$ has at least two letters. The possible reduced words for $w$ are as follows.
\begin{enumerate}\renewcommand{\labelenumi}{\arabic{enumi}.}\addtocounter{enumi}{5}
\item $i(i-1){i(i+1)i}x\mathbf{\overline{v}}$, where $x \in \{i-1,i+2\}$, and symmetric cases\\
After a braid move, we have $\ub{(i-1)i(i-1)}(i+1)ix\mathbf{\overline{v}} \in R(w)$.  By \textbf{Rule 1} applied to the bracketed subword, the letters of $\mathbf{v'} = (i+1)ix\mathbf{\overline{v}}$ must be sequentially consecutive, which means that $x = i-1$.  On the other hand, the subword $\mathbf{v'} = (i+1)i(i-1)\mathbf{\overline{v}}$ now violates \textbf{Rule 3}, since $\mathbf{v'}$ has more than two letters.  This contradiction implies that all permutations of this form are \emph{not} circuit-free.

\item $\mathbf{\overline{u}}(i-2)(i-1){i(i+1)i}(i-1)\mathbf{\overline{v}}$ and symmetric cases\\
After a braid move and two commutation moves, we can write $\mathbf{\overline{u}}(i-2)(i+1)\ub{(i-1)i(i-1)}(i+1)\mathbf{\overline{v}} \in R(w)$, breaking \textbf{Rule 1} since the subword $\mathbf{u'} = \mathbf{\overline{u}}(i-2)(i+1)$ does not have sequentially consecutive letters.  Therefore, these permutations are \emph{not} circuit-free.

\item $(i-1-t)\cdots (i-1){i(i+1)i}(i+2)\cdots (i+2+t')$ for $t \ge 1$ and $t' \ge 0$, and symmetric cases\\
$|R(w)| = t + t' +4$, $|B(w)| = t+t'+3$, $|C(w)| = 2$, and $t+t'+4 = t+t'+3 + 2 - 1$.
\end{enumerate}
The case $\mathbf{u}\neq \emptyset$ and $\mathbf{v}$ has at least two letters follows by symmetry.
\end{enumerate}
\end{proof}

Having characterized the permutations that achieve the lower bound of Theorem~\ref{thm:bounding |R|}, we now enumerate this class. That is, we count the permutations characterized by Proposition~\ref{prop:characterizing lower bound achievers}. This will provide an analogue to the enumeration of upper bound achievers given in Corollary~\ref{cor:upper bound enumeration}. Note that this is sequence A290954 in \cite{oeis}. 

\begin{corollary}\label{cor:lower bound enumeration}
The number of permutations in $S_n$ for which the lower bound of Theorem~\ref{thm:bounding |R|} is achieved is 
$$
\begin{cases}
n & \text{if } n \le 2, \text{ and}\\
\displaystyle{C_n + \frac{n^3-3n^2+8n-21}{3}} & \text{if } n > 2,
\end{cases}
$$
where $C_n = \binom{2n}{n}/(n+1)$ is the $n$th Catalan number.
\end{corollary}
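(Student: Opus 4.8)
The plan is to partition the lower-bound achievers described in Proposition~\ref{prop:characterizing lower bound achievers} into three pairwise disjoint classes and count each separately. The first class is the fully commutative permutations, those with $|C(w)|=1$; by Proposition~\ref{prop:one commutation class} these are exactly the $321$-avoiding permutations, so there are $C_n$ of them. The second class is the permutations with $|B(w)|=1$ that are \emph{not} fully commutative; by Corollary~\ref{cor:characterizing upper bound} these are precisely the permutations with $R(w)=\{i(i+1)i,(i+1)i(i+1)\}$, one for each $i\in\{1,\dots,n-2\}$, giving $n-2$ of them. The first class is disjoint from the second (which has $|C(w)|=2$), and both are disjoint from the third class below, which by hypothesis satisfies $|B(w)|>1$ and $|C(w)|>1$. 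So the total equals $C_n+(n-2)+N$, where $N$ counts the third class; since $3(n-2)+(n-3)(n^2+5)=n^3-3n^2+8n-21$, it remains to prove that $N=\tfrac{(n-3)(n^2+5)}{3}$.

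To compute $N$, I would observe that each permutation in the third class has its set of non-fixed points equal to a contiguous interval of $[n]$, because its reduced words use a contiguous block of generators. Thus each such $w$ is determined by a \emph{reduced pattern}, the induced permutation on that window, together with a placement of the window inside $[n]$. A reduced pattern of width $W$ (acting on $W$ consecutive positions, i.e.\ using $W-1$ consecutive generators, and moving both endpoints of its window) admits exactly $n-W+1$ placements, and distinct (pattern, placement) pairs give distinct permutations. Hence $N=\sum_W p_W\,(n-W+1)$, where $p_W$ is the number of distinct reduced patterns of width $W$ arising from the third class, and the task reduces to determining $p_W$.

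The values $p_W$ come from the reduced-word families of Proposition~\ref{prop:characterizing lower bound achievers} together with the Klein four-group of symmetries $\{\mathrm{id}, \text{reversal}, \text{complement}, \text{reverse-complement}\}$, which preserves the third class and fixes each window. I would compute the orbit of each listed family. The three families with no parameters, namely $i(i+1)i(i-1)i$, $i(i+1)i(i-1)$, and $(i-1)i(i+1)i(i-1)$, all have width $4$, with orbits of sizes $2$, $4$, and $1$ respectively (the last word is a palindrome fixed by the whole group), and these orbits are disjoint, so $p_4=7$. For $W\ge 5$, the family $i(i+1)i(i-1)\cdots(i-1-t)$ with $t=W-4$ contributes one orbit of size $4$, while the two-parameter family $(i-1-t)\cdots(i-1)\,i(i+1)i\,(i+2)\cdots(i+2+t')$ with $t+t'=W-5=:k$ contributes $2(k+1)=2(W-4)$ patterns. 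Thus $p_W=4+(2W-8)=2W-4$ for $W\ge 5$. Substituting gives
\[
N=7(n-3)+\sum_{W=5}^{n}(2W-4)(n-W+1),
\]
which I would simplify to $\tfrac{(n-3)(n^2+5)}{3}$; the small cases $n\le 2$ (and the degenerate $n=3$, where $N=0$) are checked by hand.

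The hard part is the symmetry bookkeeping behind $p_W$. One must verify the orbit sizes of each family, and most delicately show that in the two-parameter family the reverse-complement operation identifies the pattern for $(t,t')$ with that for $(t',t)$, so these paired parameter values lie in a common orbit of size $4$ and must not be counted twice, while the diagonal pair $(k/2,k/2)$ is self-associated and yields an orbit of size $2$. This is exactly what makes the per-width contribution of that family collapse to the clean value $2(k+1)$ regardless of the parity of $k$. In addition one must confirm that patterns from different families (and the palindromic width-$4$ family versus everything else) never coincide, so that widths and placements add without overlap.
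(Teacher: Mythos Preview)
Your argument is correct and reaches the same total, but it organizes the count differently from the paper. The paper follows the three $\star$-subcases of Proposition~\ref{prop:characterizing lower bound achievers} verbatim (its Options~1, 2, 3), writes out every symmetric variant of each reduced-word shape, and sums directly over the admissible values of the anchor letter $i$ and any extra parameters inside $[n]$; this produces the three summands $2(n-3)(n-1)$, $n-3$, and $\tfrac{(n-2)(n-3)(n-4)}{3}$, which are then added to $C_n+n-2$. You instead factor the count as $\sum_W p_W(n-W+1)$, separating the shape of the pattern from its placement, and obtain $p_4=7$ and $p_W=2W-4$ for $W\ge 5$ via a Klein-four orbit analysis. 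The two decompositions regroup the same terms: your $7(n-3)$ collects the width-$4$ contributions from all three Options, while your sum over $W\ge 5$ splits as $4\sum_{W=5}^n(n-W+1)$ (the long tails of Option~1) plus $2\sum_{W=5}^n(W-4)(n-W+1)$ (Option~3). Your pattern-times-placement viewpoint is arguably cleaner and makes the polynomial $\tfrac{(n-3)(n^2+5)}{3}$ appear more naturally; the paper's approach has the advantage of mirroring the case analysis of the characterizing proposition, so the disjointness checks are essentially already in hand. One small imprecision worth fixing in your write-up: what you need for the placement count is that the \emph{support} (the set of generators used) is a contiguous block of size $W-1$, not that the non-fixed points form a contiguous interval; the former is immediate from the listed reduced words and is what yields exactly $n-W+1$ placements.
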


\begin{proof}
For $n \in \{1,2\}$, many of the categories given in Proposition~\ref{prop:characterizing lower bound achievers} are not feasible, and the enumeration is quick to do by hand.

Now assume that $n > 2$. The first two options described in bullet points in the characterization given in Proposition~\ref{prop:characterizing lower bound achievers} are exactly those permutations that (also) achieve the upper bound of Theorem~\ref{thm:bounding |R|}, and these were counted in Corollary~\ref{cor:upper bound enumeration} as $C_n + n-2$.

Note that there is no overlap between the permutations $w$ for which $|B(w)| = 1$ or $|C(w)| = 1$ and those permutations satisfying the third bullet point in the statement of Proposition~\ref{prop:characterizing lower bound achievers}. Thus if we can enumerate the permutations satisfying that third bullet point and add $C_n + n-2$, then we will have the desired total. In other words, it remains only to enumerate permutations $w$ with $\mathbf{u}i(i+1)i\mathbf{v} \in R(w)$ such that, up to symmetry, one of the following conditions holds:
\begin{quote}
\begin{enumerate}
\renewcommand{\labelenumi}{Option \arabic{enumi}:}
\setlength{\itemsep}{.1in}
\vspace{.1in}
\item $\mathbf{u}=\emptyset$ and $\mathbf{v}\in\{(i-1)i,(i-1)(i-2)\cdots(i-1-t)\}$ for some $t \ge 0$, or
\item $\mathbf{u} = \mathbf{v} = i-1$, or
\item $\mathbf{u} = (i-1-t)\cdots (i-2)(i-1)$ and $\mathbf{v} = (i+2)(i+3)\cdots(i+2+t')$ for some $t,t' \ge 0$.
\end{enumerate}
\end{quote}
It is straightforward to check that these three options are disjoint (indeed, as designed). Thus the enumeration we want can be obtained by adding the number of permutations in each of the three categories.

\vspace{.1in}

\noindent \textbf{Option 1:} Including all symmetries, the permutations in this category have reduced words of one of the following forms, where the required braid move has been bracketed for clarity.
\begin{itemize}
\item $\ub{i(i+1)i}(i-1)i$, which can be rewritten in the form $j(j-1)\ub{j(j+1)j}$
\item $\ub{i(i+1)i}(i+2)(i+1)$, which can be rewritten in the form $(j+1)(j+2)\ub{j(j+1)j}$
\item $\ub{i(i+1)i}(i-1)(i-2)\cdots (i-1-t)$
\item $\ub{i(i+1)i}(i+2)(i+3)\cdots (i+2+t)$
\item $(i-1-t)\cdots(i-2)(i-1)\ub{i(i+1)i}$
\item $(i+2+t)\cdots(i+3)(i+2)\ub{i(i+1)i}$
\end{itemize}
Other than the equivalences noted in the first two bullet points above, 
none of these words can be transformed into one of the other configurations by a sequence of braid and commutation moves. There are $n-3$ choices of $i$ for each of the first two categories, all of which yield different permutations, contributing $2(n-3)$ permutations to the total. 
Now consider the remaining four categories. For each $i \in [2, n-2]$, we can choose any $t \in [0, i-2]$, which gives 
\[\sum\limits_{i=2}^{n-2}(i-1) = \frac{(n-2)(n-3)}{2}\] choices for $(i,t)$. Each choice yields a distinct permutation, meaning that these four categories contribute $2(n-2)(n-3)$ permutations to the total. Therefore, the number of permutations satisfying Option 1 is
$$2(n-3) + 2(n-2)(n-3) = 2(n-3)(n-1).$$

\vspace{.1in}

\noindent \textbf{Option 2:} Including all symmetries, the permutations in this category have reduced words of the following form, where the required braid move has been bracketed for clarity.
\begin{itemize}
\item $(i-1)\ub{i(i+1)i}(i-1)$, which can be rewritten in the form $(j+2)\ub{j(j+1)j}(j+2)$
\end{itemize}
Since there are $n-3$ choices for $i$ (or $j$), the number of permutations satisfying Option 2 is
$$n-3.$$

\vspace{.1in}

\noindent \textbf{Option 3:} Including all symmetries, the permutations in this category have reduced words of one of the following forms, where the required braid move has been bracketed for clarity.
\begin{itemize}
\item $(i-1-t) \cdots (i-2)(i-1)\ub{i(i+1)i}(i+2)(i+3)\cdots (i+2+t')$
\item $(i+2+t')\cdots (i+3)(i+2)\ub{i(i+1)i}(i-1)(i-2) \cdots (i-1-t)$
\end{itemize}
Neither of these words can be transformed into the other configuration by a sequence of braid and commutation moves, and any choice of $(i,t,t')$ identifies a unique permutation in this category. For each $i \in [2,n-3]$, we need $t \in [0,i-2]$ and $t' \in [0,n-3-i]$. Thus, for a fixed $i$, there are $(i-1)(n-2-i)$ distinct permutations. It follows that, in total, the number of permutations satisfying Option 3 is
\begin{align*}
2\sum_{i=2}^{n-3}&(i-1)(n-2-i) = 2\left[(n-2)\sum_{i=2}^{n-3}(i-1) - \sum_{i=2}^{n-3}i^2 + \sum_{i=2}^{n-3}i\right]\\
&=(n-2)(n-3)(n-4) - \frac{(n-3)(n-2)(2n-5)}{3} + (n-3)(n-2)\\
&=\frac{(n-2)(n-3)(n-4)}{3}.
\end{align*}

Therefore, the number of permutations achieving the lower bound of Theorem~\ref{thm:bounding |R|} is
$$C_n + n-2 + 2(n-3)(n-1) + n-3 + \frac{(n-2)(n-3)(n-4)}{3},$$
which simplifies to the desired result.
\end{proof}

\section{A proposed connection to weak order}\label{sec:conjecture}

Proposition~\ref{prop:characterizing lower bound
achievers} gives a characterization of permutations $w \in S_n$ that
achieve the lower bound of Theorem \ref{thm:bounding |R|} by describing their possible reduced words. 
Based on experimentation in Maple, it also appears that the shape of the interval
$[e,w] = \{ u \in S_n \mid e \leq u \leq w\}$ in weak order provides an alternative characterization.  Roughly
speaking, permutations whose intervals have a narrow shape
achieve the lower bound in Theorem \ref{thm:bounding |R|}, whereas permutations with a wider interval structure have too many reduced words to achieve this lower bound.  We formalize this observation in Conjecture \ref{conj:lower bound} below.

We require several additional definitions in order to state our
conjecture.

\begin{definition}
The {\em weak order} on Coxeter groups also appears naturally in the study of
reduced words. Let $u$ and $v$ be elements of a finitely generated Coxeter group $W$. Then $u\leq v$
in (right) weak order if there is a sequence of simple reflections
$s_{i_1},\ldots,s_{i_k}$ such that $us_{i_1}s_{i_2}\cdots s_{i_k}=v$
and $\ell(us_{i_1}\cdots s_{i_j})+1=\ell(us_{i_1}\cdots s_{i_j+1})$
for all $j=1,\ldots, k-1$. Each reduced
expression for $w$ then corresponds to a maximal chain from $e$ to $w$ in
the weak order on $W$, and vice versa.
\end{definition}

The set of indices appearing in any, equivalently every, reduced word
for $w$ is the \emph{support} of $w$, and we denote the \emph{size} of
this support by $\operatorname{sup}(w)$. The poset $[e,w]$ is ranked,
and we define $r_i$ to be the number of elements at rank $i$, where $i
\in \{0, \dots, \ell(w)\}$. The {\em width} of $w$ is the maximum of
$\{r_0,r_1,\ldots,r_{\ell(w)}\}$, which we denote by
$\operatorname{wid}(w)$. We have added $\operatorname{wid}(w)$ to the database of
combinatorial statistics; see \cite{FindStat}. Figure~\ref{fig:widthAndSupport}
provides several examples that calculate the width and size of the
support of three different permutations.

\begin{figure}[ht]
\begin{tikzpicture}
\begin{scope}
\node(34532) at (0,5) {34532};
\node (3432) at (-.5,4){3432};
\node (3453) at (.5,4) {3453};
\node (432) at (-1,3) {432};
\node (343) at (0,3) {343};
\node (345) at (1,3) {345};
\node (43) at (-.5,2) {43};
\node (34) at (.5,2) {34};
\node (4) at (-.5,1) {4};
\node (3) at (.5,1) {3};
\node (e) at (0,0){$e$};
\draw (34532)--(3432);
\draw (34532)--(3453);
\draw (3432)--(432);
\draw (3432)--(343);
\draw (3453)--(343);
\draw (3453)--(345);
\draw (432)--(43);
\draw (343)--(43);
\draw (343)--(34);
\draw (345)--(34);
\draw (43)--(4);
\draw (34)--(3);
\draw (4)--(e);
\draw (3)--(e);
\end{scope}

\begin{scope}[shift={(4,0)}]
\node (12312) at (0,5) {12312};
\node (1231) at (-.75,4) {1231};
\node (2312) at (.75,4) {2312};
\node (123) at (-1,3) {123};
\node (121) at (0,3) {121};
\node (231) at (1,3) {231};
\node (12) at (-1,2) {12};
\node (21) at (0,2) {21};
\node (23) at (1,2) {23};
\node (1) at (-.75,1) {1};
\node (2) at (.75,1) {2};
\node (e) at (0,0) {$e$};

\draw (12312)--(1231);
\draw (12312)--(2312);
\draw (1231)--(123);
\draw (1231)--(121);
\draw (2312)--(231);
\draw (121)--(12);
\draw (121)--(21);
\draw (123)--(12);
\draw (231)--(21);
\draw (231)--(23);
\draw (12)--(1);
\draw (21)--(2);
\draw (23)--(2);
\draw (1)--(e);
\draw (2)--(e);

\end{scope}

\begin{scope}[shift={(8,.5)}]
\node (2321) at (0,4) {2321};
\node (232) at (-.5,3) {232};
\node (321) at (.5,3) {321};
\node (23) at (-.5,2) {23};
\node (32) at (.5,2) {32};
\node (2) at (-.5,1) {2};
\node (3) at (.5,1) {3};
\node (e) at (0,0) {$e$};

\draw (2321)--(232);
\draw (2321)--(321);
\draw (232)--(23);
\draw (232)--(32);
\draw (321)--(32);
\draw (23)--(2);
\draw (32)--(3);
\draw (2)--(e);
\draw (3)--(e);
\end{scope}
\end{tikzpicture}
\caption{
This figure shows the interval $[e,w]$ for three permutations $w$, where the top element represents one choice from $R(w)$.  For the leftmost figure,  $\operatorname{wid}(w) = 3$ and $\operatorname{sup}(w) = 4$.  In the middle figure, $\operatorname{wid}(w) = \operatorname{sup}(w) = 3$.  On the right,  $\operatorname{wid}(w) = 2$ while $\operatorname{sup}(w)=3$.}
\label{fig:widthAndSupport}
\end{figure}

We now provide a conjectural reformulation of our proposition characterizing the permutations that achieve the lower bound in Theorem \ref{thm:bounding |R|}.  In particular, we rephrase the conditions on the reduced words articulated in Proposition \ref{prop:characterizing lower bound achievers} in terms of the width and support of the permutation. The conjecture has been verified using Maple$^{\text{TM}}$ for $S_n$ through $n=6$.

\begin{conjecture}
\label{conj:lower bound}
The permutation $w$ is circuit-free, and thus achieves the lower bound in Theorem \ref{thm:bounding |R|}, if and only if at least one of the following conditions is satisfied:
\begin{enumerate}
\item $|C(w)|=1$
\item $|B(w)|=1$
\item $\operatorname{wid}(w)=2$ \label{conjCondC}
\item $\operatorname{wid}(w) = \operatorname{sup}(w) = 3$ \label{conjCondD}
\end{enumerate}
\end{conjecture}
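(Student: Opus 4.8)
The plan is to derive Conjecture~\ref{conj:lower bound} from Proposition~\ref{prop:characterizing lower bound achievers} by showing that the four numerical conditions on the interval $[e,w]$ are just a repackaging of the reduced-word conditions already proved to characterize circuit-free permutations. Conditions (1) and (2) are literally the first two bullets of Proposition~\ref{prop:characterizing lower bound achievers}, so the whole problem collapses to the regime $|B(w)|>1$ and $|C(w)|>1$, where I must prove
\[\text{($w$ has a reduced word as in the third bullet)} \iff \big(\operatorname{wid}(w)=2\big)\ \text{or}\ \big(\operatorname{wid}(w)=\operatorname{sup}(w)=3\big).\]
A helpful first observation is that when $w$ satisfies the third bullet, the letters appearing in its reduced words form an interval of consecutive integers (a braid factor $i(i+1)i$ together with monotone consecutive runs), so $\operatorname{sup}(w)$ is simply the size of that interval and is trivial to read off.

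The forward implication is a finite computation. Up to the reversal, complement, and reverse-complement symmetries, the third bullet comprises only the two short ``balanced'' families $i(i+1)i(i-1)i$ (Option~1, short case) and $(i-1)i(i+1)i(i-1)$ (Option~2), the single-tail family $i(i+1)i(i-1)(i-2)\cdots(i-1-t)$, the two-tail family $(i-1-t)\cdots(i-1)\,i(i+1)i\,(i+2)\cdots(i+2+t')$, and the small words $i(i+1)i(i-1)$ and $(i+2)i(i+1)i(i-1)$. For each family I would build $[e,w]$, list the group elements $u\le w$ by length, and read off the rank numbers $r_0,\dots,r_{\ell(w)}$. The balanced families have support $\{i-1,i,i+1\}$ and width exactly $3$, giving condition~(4); the middle interval of Figure~\ref{fig:widthAndSupport} realizes this. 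The monotone-tail families have width exactly $2$ and support of size $\ge 3$, giving condition~(3); the rightmost interval ($2321=i(i+1)i(i-1)$) of Figure~\ref{fig:widthAndSupport} illustrates the minimal case. Since lengthening a tail only extends the ``ladder'' without introducing new branching, one generic computation per family handles all tail lengths at once.

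For the converse, that conditions (3) and~(4) force circuit-freeness, I would argue the contrapositive: if $|B(w)|>1$, $|C(w)|>1$, and $w$ is \emph{not} circuit-free, then $\operatorname{wid}(w)\ge 4$, or $\operatorname{wid}(w)=3$ with $\operatorname{sup}(w)\ge 4$ (precisely the negation of (3) and~(4) in this range). A circuit means $\Gamma(w)$ has a cycle, which by Corollary~\ref{cor:cells have at most one element} corresponds to a fully occupied $2\times 2$ subgrid of $\mathcal{T}(w)$; as in each circuit construction in the proof of Proposition~\ref{prop:characterizing lower bound achievers}, this comes from a reduced word carrying a braid factor together with an \emph{independent} commutation move, a letter that commutes past the braid, a nonmonotone neighbor, or a same-direction run of length $\ge 3$. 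The task is to convert each such local ``excess'' into a lower bound on the rank numbers $r_j$: the first three excess types should produce a single rank of size $4$ (hence $\operatorname{wid}\ge 4$), while the long same-direction run should instead produce a rank of size $3$ together with an enlarged support (the $34532$-type interval on the left of Figure~\ref{fig:widthAndSupport}, with $\operatorname{wid}=3$ and $\operatorname{sup}=4$, is exactly such a non-example).

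The hard part, and what I expect to consume the bulk of the work, is exactly this last step: $\operatorname{wid}(w)$ is a global invariant of the entire interval, whereas a circuit is witnessed by a single reduced word, and there is no ready-made dictionary translating rank-size profiles of weak-order intervals into reduced-word structure. My preferred route is to build such a dictionary through permutation pattern avoidance, showing that $\operatorname{wid}(w)\le 2$ and that $\operatorname{wid}(w)=\operatorname{sup}(w)=3$ are each governed by avoidance of an explicit finite list of patterns (in the spirit of the pattern analyses of \cite{billey-jockusch-stanley,tenner-rdpp}), and then checking that these avoidance classes coincide with the reduced-word families above. Pinning down the correct pattern list, and in particular proving that it \emph{lower}-bounds the width so that a circuit is forced to violate it, is the step where the genuine difficulty lies.
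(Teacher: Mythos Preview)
The statement you are attempting to prove is labeled a \emph{Conjecture} in the paper, and the paper supplies no proof. Immediately after stating it, the authors remark that the forward implication (circuit-free $\Rightarrow$ one of conditions (1)--(4)) ``is straightforward to check case by case'' using the explicit list in Proposition~\ref{prop:characterizing lower bound achievers}, and that ``the more difficult direction seems to be proving that these conditions on the width and support are sufficient.'' Your plan matches this assessment exactly: your forward direction is the same finite verification the authors allude to, and your converse is a proposed strategy for the direction they leave open.

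On the substance of your sketch, two remarks. First, your assertion that ``a circuit\ldots corresponds to a fully occupied $2\times 2$ subgrid of $\mathcal{T}(w)$'' is not correct in general: $\Gamma(w)$ is bipartite, so its cycles have even length, but nothing forces them to be $4$-cycles. Indeed, the circuit exhibited for \textbf{Rule~3} in the proof of Proposition~\ref{prop:characterizing lower bound achievers} is an $8$-step alternation of braid and commutation moves, hence an $8$-cycle in $\Gamma(w)$, and does not come from a $2\times 2$ block. Your contrapositive analysis would therefore need to handle longer cycles, not just $4$-cycles. Second, your proposed route via a pattern-avoidance characterization of $\operatorname{wid}(w)\le 2$ and of $\operatorname{wid}(w)=\operatorname{sup}(w)=3$ is a reasonable line of attack, but it is precisely the open content of the conjecture: you would need to produce the finite pattern lists and prove both containments, and you correctly flag this as the step ``where the genuine difficulty lies.'' In short, your proposal accurately identifies what is easy and what is hard, but it does not yet close the gap that the paper itself leaves open.
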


\begin{example}
Consider again the three permutations referenced in Figure~\ref{fig:widthAndSupport}. The permutation $[152463]$, with reduced word $34532$, has width three and support four, so
Conjecture~\ref{conj:lower bound} predicts that it should not achieve
the lower bound and indeed we have $|R([152463]) = 6$, while
$|B([152463])|+|C([152463])|-1 = 5$. The permutation $[3421]$, with reduced word $12312$, satisfies
Condition~(\ref{conjCondD}) with width and support three, and it achieves
the lower bound with 
$|R([3421])| = |B([3421])|+|C([3421])|-1 = 5$. Condition~(\ref{conjCondC}) holds for the permutation $[4132]$, with reduced word $2321$, and $|R([4132])| = |B([4132])|+|C([4132])|-1 = 3$.
\end{example}

Of course, since the characterization in Proposition \ref{prop:characterizing lower bound achievers} provides an exhaustive and concrete list of possible reduced expressions for permutations that achieve the desired lower bound, it is straightforward to check case by case that each of those permutations satisfies at least one of the width or support criteria in Conjecture \ref{conj:lower bound}.  The more difficult direction seems to be proving that these conditions on the width and support are sufficient in order for the permutation to achieve the lower bound.  More generally, it would thus be interesting to develop a conceptual 
understanding of precisely how the width of the permutation affects the number of reduced words in terms of the number of braid and commutation classes.  Having such an interpretation in terms of intervals in the weak order might also offer a natural pathway for generalizing some of the results in this paper to other Coxeter groups.

\bibliographystyle{alpha}
\bibliography{main}

\end{document}